\newtheorem{thm}{Theorem}[section]
\newtheorem{prop}[thm]{Proposition}
\newtheorem{lem}[thm]{Lemma}
\newtheorem{quest}[thm]{Question}
\newtheorem{defn}[thm]{Definition}
\setlist[enumerate]{itemsep=2ex, topsep=2ex} %spaces out enumerate/itemize better
\setlist[itemize]{itemsep=2ex, topsep=2ex}
\newcommand{\E}{\mathbb{E}}
\newcommand{\Gam}{\Gamma}
\newcommand{\ep}{\varepsilon}
\newcommand{\Om}{\Omega}
\newcommand{\del}{\delta}
\renewcommand{\l}{\left}
\renewcommand{\r}{\right}
\newcommand{\half}{\frac{1}{2}}
\newcommand{\sm}{\setminus}
\newcommand{\sub}{\subseteq}
\renewcommand{\c}[1]{\mathcal{#1}}
\renewcommand{\b}[1]{\mathbf{#1}}
\newcommand{\rec}[1]{\frac{1}{#1}}
\newcommand{\f}[2]{\frac{#1}{#2}}
\newcommand{\mr}[1]{\mathrm{#1}}
\newcommand{\supp}{\mr{supp}}
\newcommand{\greedy}{\c{S}_g}
\title{Online Card Games}
\author{Sam Spiro\footnote{Dept.\ of Mathematics, UCSD {\tt sspiro@ucsd.edu}. This material is based upon work supported by the National Science Foundation Graduate Research Fellowship under Grant No. DGE-1650112.}}
\date{\today}
\address
{Department of Mathematics \newline \indent
	University of California, San Diego \newline \indent
	La Jolla, CA, 92093-0112, USA}\fi
\begin{document}
	\maketitle
\begin{abstract}
	Consider the following one player game.  A deck containing $m$ copies of $n$ different card types is shuffled uniformly at random.  Each round the player tries to guess the next card in the deck, and then the card is revealed and discarded.  It was shown by Diaconis, Graham, He, and Spiro that if $m$ is fixed, then the maximum expected number of correct guesses that the player can achieve is asymptotic to $H_m \log n$, where $H_m$ is the $m$th harmonic number.  
	
	In this paper we consider an adversarial version of this game where a second player shuffles the deck according to some (possibly non-uniform) distribution.  We prove that a certain greedy strategy for the shuffler is the unique optimal strategy in this game, and that the guesser can achieve at most $\log n$ expected correct guesses asymptotically for fixed $m$ against this greedy strategy.
\end{abstract}

\section{Introduction}
Consider the following one player game played by a player Guesser.  A deck with $n$ different card types with each card type appearing $m$ times is shuffled uniformly at random.  Each round, Guesser guesses what the card type of the next card in the deck is.  After the guess is made, the true identity of the card is revealed and then discarded from the deck.  This process continues until the deck is depleted, at which point the game ends and Guesser is given a point for each correct guess they made during the game.

This game is called the \textit{complete feedback model}, and it was motivated by several real world problems related to clinical trials \cite{BH,E} and to extrasensory perception experiments~\cite{DG}. We refer the interested reader to \cite{DGS} for more information on the history of this model, as well as to variants of the model which involve different levels of feedback.

A natural question to ask regarding the complete feedback model is how many points Guesser can expect to guess if they try to maximize or minimize their score.  To this end, if $\c{G}$ is a strategy in the complete feedback model\footnote{We formally define what a ``strategy'' is in Section~\ref{sec:optimal}.  Intuitively, it is a function which takes in the information which is currently known to Guesser and outputs a (possibly random) card type to guess.} and if $\pi$ is a way to shuffle a deck which consists of $m$ copies of $n$ different card types, we define $C_{m,n}(\c{G},\pi)$ to be the number of correct guesses made by Guesser in the complete feedback model if they use the strategy $\c{G}$ and if the deck is shuffled according to $\pi$.  Building on work of Diaconis and Graham~\cite{DG}, the following was proven by Diaconis, Graham, He, and Spiro~\cite{DGHS} and by He and Ottolini~\cite{HO}.  Here and throughout $\log n$ denotes the natural logarithm and $\Gam(x)$ denotes the gamma function.

\begin{thm}[\cite{DG,DGHS,HO}]\label{thm:old complete}
	For any fixed $m$ and $n$ tending towards infinity, we have
	\begin{align*}\max_{\c{G}}\E[C_{m,n}(\c{G},\pi)]&\sim  H_m \log n,\\ 
		\min_{\c{G}}\E[C_{m,n}(\c{G},\pi)]&\sim\Gam\l(1+\rec{m}\r) n^{-1/m},\end{align*}
	where the maximum and minimums run over all strategies $\c{G}$, the shuffling $\pi$ is chosen uniformly at random, and $H_m:=\sum_{i=1}^{m} i^{-1}$ is the $m$-th harmonic number.  Moreover, $\c{G}$ achieves this maximum/minimum if it guesses a most/least likely card each round.
\end{thm}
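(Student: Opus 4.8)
The plan is to reduce the problem to analyzing the two greedy strategies and then to evaluate their expected scores through a single exact identity.

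\textbf{Reduction to greedy.} The first observation is that Guesser's choices never influence which card is turned over, so the sequence of revealed cards is a uniformly random permutation of the deck no matter what strategy is used. Consequently, if $d_i(t)$ is the number of copies of type $i$ still in the deck after $t$ reveals, then conditioned on the first $t$ reveals the next card is of type $i$ with probability $d_i(t)/(mn-t)$, and the law of the vector $(d_1(t),\dots,d_n(t))$ does not depend on the strategy. Writing $\E[C_{m,n}(\c G,\pi)]=\sum_{t=0}^{mn-1}\P[\text{Guesser correct in round }t{+}1]$ and conditioning on the history, the $(t{+}1)$-st summand equals $\E[\,d_{j_{t+1}}(t)/(mn-t)\,]$, where $j_{t+1}$ is the round-$(t{+}1)$ guess; for every realization of the history this is maximized by guessing a type with the most copies remaining and minimized by guessing a type with the fewest copies remaining (where, for the minimizer, a ``type with the fewest copies'' may be a type with no copies left at all, so the relevant count can be $0$). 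This proves the ``moreover'' assertion, and after the change of variables $s=mn-t$ it yields the exact formulas
\begin{align*}
\max_{\c G}\E[C_{m,n}(\c G,\pi)] &= \sum_{s=1}^{mn}\frac{\E[\max_i d_i^{(s)}]}{s},
& \min_{\c G}\E[C_{m,n}(\c G,\pi)] &= \sum_{s=1}^{mn}\frac{\E[\min_i d_i^{(s)}]}{s},
\end{align*}
where $(d_1^{(s)},\dots,d_n^{(s)})$ records the number of copies of each type in a uniformly random $s$-subset of the $mn$-card deck, i.e.\ a multivariate hypergeometric vector.

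\textbf{The maximum.} Here I would show that $\E[\max_i d_i^{(s)}]$ is, up to lower-order error, the step function equal to $j$ for $s$ between the consecutive thresholds $\Theta(n^{1-1/j})$, with the top range being $s\in(\Theta(n^{1-1/m}),mn]$. The engine is a routine first-and-second moment estimate: for $X_j=\#\{i:d_i^{(s)}\ge j\}$ one has $\E[X_j]=\Theta(n(s/mn)^j)$ and, using the negative association of the $d_i^{(s)}$, $\Var(X_j)=O(\E[X_j])$, so $\max_i d_i^{(s)}\ge j$ holds with probability $1-o(1)$ once $s\gg n^{1-1/j}$ and with probability $o(1)$ once $s\ll n^{1-1/j}$. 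Each transition is confined to an interval of $s$ of multiplicative width $O(1)$ and so contributes only $O(1)$ to $\sum_s(\cdot)/s$; on the bulk ranges $\E[\max_i d_i^{(s)}]=j+o(1)$, and summing $j\cdot(\tfrac1j-\tfrac1{j+1})\log n=\tfrac1{j+1}\log n$ over $j=1,\dots,m-1$ together with $m\cdot\tfrac1m\log n$ for the top range gives $\big(\sum_{k=1}^m k^{-1}\big)\log n=H_m\log n$.

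\textbf{The minimum.} Now $\min_i d_i^{(s)}\ge 1$ exactly when the $s$ remaining cards already contain all $n$ types, a coupon-collector event of probability $q_s$; since the $d_i^{(s)}$ are negatively correlated, $q_s\le e^{-(1+o(1))u^m/(m^mn^{m-1})}$ with $u:=mn-s$, which forces $q_s=n^{-\omega(1)}$ unless $s$ lies within $n^{1-1/m}\log n$ of $mn$. On that window $1/s=(1+o(1))/(mn)$, and inclusion--exclusion over uncovered types together with Poissonization gives $q_s=(1+o(1))\exp(-u^m/(m^mn^{m-1}))$. After checking (again by inclusion--exclusion) that the contributions of the events $\{\min_i d_i^{(s)}\ge j\}$ for $j\ge 2$ total only $O(n^{-1/(m-1)})=o(n^{-1/m})$, one obtains
\[
\min_{\c G}\E[C_{m,n}(\c G,\pi)]=\frac{1+o(1)}{mn}\sum_{u\ge 0}e^{-u^m/(m^mn^{m-1})}=\frac{1+o(1)}{mn}\int_0^\infty e^{-u^m/(m^mn^{m-1})}\,du ,
\]
and the substitution $t=u^m/(m^mn^{m-1})$ evaluates the integral as $n^{1-1/m}\Gam(1/m)$, so the expected minimum is $(1+o(1))\Gam(1/m)n^{-1/m}/m=(1+o(1))\Gam(1+1/m)n^{-1/m}$. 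The case $m=1$ is trivial, since then $\min_i d_i^{(s)}$ equals $0$ for $s<n$ and $1$ for $s=n$, giving exactly $1/n$.

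\textbf{Main obstacle.} The conceptual outline is short; the real work is making the two asymptotic evaluations rigorous \emph{uniformly in $s$} --- i.e.\ controlling the hypergeometric tail probabilities carefully enough that, for the maximum, the transition windows contribute $o(\log n)$, and for the minimum, both the Poissonization error on the main window and the $j\ge 2$ terms are genuinely $o(n^{-1/m})$. A small but essential modeling point in the reduction is that the minimizing Guesser is permitted to name a type with no copies remaining; without this the stated minimum would be false, as Guesser would be forced to guess the final card correctly.
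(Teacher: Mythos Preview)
This theorem is not proved in the paper at all: it is quoted from prior work (Diaconis--Graham, Diaconis--Graham--He--Spiro, and He--Ottolini) as background for the paper's own results, and no argument is given in the present paper. So there is nothing to compare your proof against here.

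That said, your outline is a reasonable and essentially correct approach to the cited results. The reduction to the exact identity
\[
\max_{\c G}\E[C_{m,n}(\c G,\pi)]=\sum_{s=1}^{mn}\frac{\E[\max_i d_i^{(s)}]}{s},
\qquad
\min_{\c G}\E[C_{m,n}(\c G,\pi)]=\sum_{s=1}^{mn}\frac{\E[\min_i d_i^{(s)}]}{s},
\]
with $(d_i^{(s)})$ multivariate hypergeometric, is standard and correct, as is the observation that the minimizer may name an exhausted type. Your threshold analysis for the maximum is right: $\max_i d_i^{(s)}$ drops below $j$ near $s\asymp n^{1-1/j}$, and the telescoping sum $\log n+\sum_{j=1}^{m-1}\frac{1}{j+1}\log n=H_m\log n$ is exactly the intended answer. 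For the minimum, the Poisson/coupon-collector heuristic and the Gamma integral are also correct. The points you flag under ``Main obstacle'' are genuine: one does need uniform control of the hypergeometric tails across $s$ to bound the transition windows by $o(\log n)$ and to justify both the Poissonization and the negligibility of the $j\ge 2$ terms. The published proofs organize these computations somewhat differently (e.g.\ tracking the times at which the maximum multiplicity drops, rather than summing over $s$), but the analytic content is the same.
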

We note that the regime where $n$ is fixed and $m$ tends to infinity is studied in \cite{DG},  and in this case both the maximum and minimum scores are asymptotic to $m$.  

While the complete feedback model uses a deck that is shuffled uniformly, it is natural to consider other ways of shuffling the deck.  For example, Ciucu~\cite{C} investigated the model where dovetail shuffles are used, Pehlivan considered top to random shuffles~\cite{P}, and recently Liu~\cite{L} considered riffle shuffles.  In this paper we unify these approaches by allowing any distribution for shuffling the cards to be used.

More precisely, we consider the following two player game played by Guesser and Shuffler.  The game starts with an initial deck of cards.  Each round Shuffler chooses a card which is in the deck\footnote{In this formulation of the game, Shuffler chooses each card in an online fashion, possibly based on what Guesser has done in previous rounds.  Alternatively one could consider a model where Shuffler shuffles the deck at the start of the game and then draws each card according to this shuffling.  It is not difficult to show that the optimal expected scores under these two models are the same, but for concreteness we will only consider the online setting.}, and simultaneously Guesser guesses the card type of the selected card.  The card is then revealed and discarded.  The game continues in this way until the deck is completed.  We call this the \textit{Guesser-Shuffler game}.  We define $C_{m,n}(\c{G},\c{S})$ to be the number of correct guesses made by Guesser in this game if the deck starts with $m$ copies of $n$ different card types and if Guesser and Shuffler use the strategies $\c{G}$ and $\c{S}$, respectively.

For example, Shuffler could use the strategy ``shuffle the deck uniformly at random at the start of the game,  then sequentially pick the top card from the deck each round.''  Under this strategy the game reduces to the complete feedback model, and we know from Theorem~\ref{thm:old complete} that Guesser can play so that they get $H_m\log n$ points in expectation.  The central question we wish to answer is: can Shuffler significantly decrease this quantity if they use a different strategy?

To this end, we say that a strategy $\c{S}$ for Shuffler is a \textit{minimizing Shuffler strategy} if for all $m,n$ we have
\[\max_{\c{G}}\E[C_{m,n}(\c{G},\c{S})]=\min_{\c{S}'}\max_{\c{G}}\E[C_{m,n}(\c{G},\c{S}')],\]
where here the maximums and minimums range over all strategies for Guesser and Shuffler, respectively.  We similarly define what it means for a strategy to be a \textit{maximizing Shuffler strategy}.  To state the main result of this paper, we define the following.

\begin{defn}
The \textit{greedy strategy} $\greedy$ for Shuffler is defined as follows.  Each round, if the deck has $s$ different card types remaining in the deck, then Shuffler selects each card type with probability $s^{-1}$.
\end{defn}
For example, if at some point the deck consisted of 100 copies of one card type and 1 copy of another card type, then $\greedy$ would select each of these card types with probability $\half$.  One can easily show that the greedy strategy minimizes the probability that Guesser correctly guesses a card type in any given round, but it is far from obvious that it is a minimizing Shuffler strategy.  Indeed, in our example, if Shuffler chooses the card type with multiplicity 1, then Guesser can guarantee 100 correct guesses from later rounds.  Thus it might be the case that Shuffler should choose this low multiplicity card type with probability smaller than $\half$.  However, this turns out not to be the case.

\begin{thm}\label{thm:greedy}
	The greedy strategy $\greedy$ is both a minimizing and maximizing Shuffler strategy for the Guesser-Shuffler game.  Moreover, it is the unique minimizing Shuffler strategy.
\end{thm}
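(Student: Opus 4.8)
The plan is to run a backward induction on the game state, encoded as the multiset $S$ of multiplicities of the types still in the deck; I write $S\ominus a$ for the state after Shuffler plays a type of multiplicity $a$ (so one copy of $a$ is deleted and, if $a\ge 2$, one copy of $a-1$ is inserted), and $N=|S|$ for the number of present types. The first step is to reduce the minimizing game to a one-round Bellman equation. Although a round is played simultaneously, Guesser knows Shuffler's strategy and hence Shuffler's current distribution $q$ over present types, and Guesser's guess does not affect the transition; so an optimal (maximizing) Guesser simply guesses a type carrying the largest $q$-mass, and writing $f(S)$ for the value of the minimizing game from $S$ we get
\[ f(S)=\min_{q}\Big[\max_x q_x+\sum_x q_x\, f(S\ominus a_x)\Big], \]
the minimum over distributions $q$ supported on present types, where $a_x$ is the multiplicity of type $x$. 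The greedy (uniform) choice gives the value $\hat f(S):=\frac{1}{N}\big(1+\sum_{a\in S}f(S\ominus a)\big)$, and since greedy is one available Shuffler strategy we always have $f(S)\le\hat f(S)$.

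The second step is to solve the inner problem. Sorting the child values $\{f(S\ominus a):a\in S\}$ (with multiplicity) as $m_1\le\cdots\le m_N$, a routine water-filling/LP computation gives
\[ \min_{q}\Big[\max_x q_x+\sum_x q_x m_x\Big]=\min_{1\le k\le N}\frac{1+m_1+\cdots+m_k}{k}. \]
Because $\sum_{i\le k}(m_{k+1}-m_i)$ is nondecreasing in $k$, the minimum on the right is attained at $k=N$ — that is, greedy is optimal and $f(S)=\hat f(S)$ — exactly when $\sum_{i=1}^{N-1}(m_N-m_i)\le 1$; and using the definition of $\hat f$ this rearranges to the clean condition $\max_a f(S\ominus a)\le\hat f(S)$, which moreover holds strictly unless $m_N=\hat f(S)$.

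The crux is therefore to show $\max_a f(S\ominus a)\le\hat f(S)$ for every state. Rewriting once more, this is $\sum_{a\in S}\big(\max_b f(S\ominus b)-f(S\ominus a)\big)\le 1$; and if we grant the monotonicity statement $f(S\ominus b)\le f(S)$ (deleting a card cannot help Guesser) then the left-hand side is at most $\sum_a\big(f(S)-f(S\ominus a)\big)=N\big(f(S)-\hat f(S)\big)+1\le 1$, where we used $f(S)\le\hat f(S)$. So the whole statement comes down to monotonicity, which I would prove by a shadow-game argument: a Guesser playing in the deck $S$ runs an optimal response for the smaller deck $S\ominus b$, keeping the two decks synchronized round-by-round except on the single round where the one surplus card is revealed, so that (real score) $\ge$ (shadow score) $\ge f(S\ominus b)$, since a Guesser best-response always exceeds the Shuffler-minimax value. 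Iterating through all states pins down $f\equiv\hat f$, so greedy is a minimizing Shuffler strategy; strict monotonicity $f(S)-f(S\ominus a)>0$, which drops out a posteriori from the recursion, forces $m_N<\hat f(S)$ and hence makes the one-round minimizer unique, giving the uniqueness clause. For the maximizing game one argues in parallel from $\tilde f(S)=\max_q[\min_x q_x+\sum_x q_x\,\tilde f(S\ominus a_x)]$ together with the companion identity $\max_q[\cdots]=\max(m_N,\hat f(S))$; here one must also account for the fact that a minimizing Guesser may deliberately guess already-exhausted types, which makes the children reached by exhausting a type contribute $0$, but the same combination of the water-filling identity with monotonicity shows greedy remains optimal.

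I expect the monotonicity lemma, and the bookkeeping needed to make the shadow coupling precise — in particular when $b=1$ (the surplus card belongs to a type absent from $S\ominus b$) and when verifying that the shadow-Shuffler induced by the real Shuffler is itself a legitimate online strategy — to be the main obstacle; the water-filling identity and the algebra chaining everything together are elementary by comparison. A lesser but genuine subtlety is disentangling the dodging phenomenon in the maximizing game cleanly enough that the $\tilde f$-recursion still closes.
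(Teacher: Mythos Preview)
Your approach is genuinely different from the paper's and is essentially correct, though the part you yourself flag as the main obstacle is exactly where the two routes diverge.

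Both arguments reduce to the same one-step criterion: greedy is optimal at $S$ iff $\max_a f(S\ominus a)\le \hat f(S)$, and uniquely so iff the inequality is strict. You reach this via the water-filling identity $\min_q[\max_x q_x+\sum_x q_x m_x]=\min_k k^{-1}(1+m_1+\cdots+m_k)$, which is a clean characterization the paper never states explicitly. The paper instead proves directly, by induction on deck size, the strict inequality $|J|\,f(d-\delta_i)<1+\sum_{j\in J}f(d-\delta_j)$ for \emph{every} subset $J\subseteq\supp(d)$ (Lemma~2.4), and then shows that any non-uniform $q$ can be strictly improved by shifting mass toward a less-likely type. The paper's induction never compares a parent to a child; it only compares siblings $f(d-\delta_i)$ versus $f(d-\delta_j)$ according to whether $d_i\ge d_j$ (Lemma~2.3), and both lemmas are proved by a ``strategy-stealing'' device: modify a minimizing Shuffler strategy at a single deck state and use the recursion.

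Your route instead hangs everything on the parent--child monotonicity $f(S\ominus b)\le f(S)$, to be proved by a shadow coupling. This does work, but the point you anticipate is real: the shadow-Shuffler you induce from an arbitrary Markovian Shuffler on $S$ is in general only a \emph{history-dependent} strategy on $S\ominus b$ (the shadow state forgets whether the surplus $b$-card has already appeared), so you must either argue separately that enlarging Shuffler's strategy class to history-dependent play does not lower the minimax value, or else exploit the special feature of this game that Shuffler's state evolution is oblivious to Guesser, so a Shuffler strategy is just a distribution over permutations and the coupling goes through at that level. Either fix is routine once noticed. For uniqueness you need the \emph{strict} inequality $m_N<\hat f(S)$; your coupling only gives $\le$, but your claim that strictness ``drops out a posteriori'' is right: once $f=\hat f$ is established everywhere, a short induction on $\sum d_i$ (splitting on whether $d_i\ge 2$ or $d_i=1$) yields $f(d)>f(d-\delta_i)$ from the explicit greedy recursion. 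So there is no genuine gap, but both of these steps deserve a line of justification rather than a parenthetical. The paper's route trades the conceptual coupling for a slightly heavier inductive lemma, but gets the strict inequality (and hence uniqueness) for free.
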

We note that there is no unique maximizing Shuffler strategy.  Indeed, once a card type $i$ has been exhausted from the deck, Guesser can minimize their score by guessing $i$ every round, and at this point the strategy that Shuffler uses is irrelevant.  However, it is possible to show that every maximizing Shuffler strategy must agree with the greedy strategy until a card type runs out of the deck, see Theorem~\ref{thm:greedyTech} for a precise statement of this form.

By using Theorem~\ref{thm:greedy} together with results from variants of the coupon collector and birthday problems, we can prove asymptotic bounds on how many correct guesses are made in the Guesser-Shuffler game when one player tries to minimize the number of correct guesses and the other tries to maximize it.

\begin{thm}\label{thm:main}
	For $m$ fixed and $n$ tending towards infinity, we have
	\begin{align}
	\min_{\c{S}}\max_{\c{G}}\E[C_{m,n}(\c{G},\c{S})]=\max_{\c{G}}\E[C_{m,n}(\c{G},\greedy)]&\sim \log n,\label{eq:mainMin}\\ 
	\max_{\c{S}}\min_{\c{G}}\E[C_{m,n}(\c{G},\c{S})]=\min_{\c{G}}\E[C_{m,n}(\c{G},\greedy )]&\sim \Gam\l(1+\rec{m}\r)(m!)^{1/m}\cdot  n^{-1/m}.\label{eq:mainMax}
	\end{align}
\end{thm}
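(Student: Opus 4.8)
The plan is to invoke Theorem~\ref{thm:greedy} to reduce both identities to analyzing the game against $\greedy$, and then to recognize the two resulting quantities as classical birthday and coupon-collector statistics. Note first that $\greedy$'s moves do not depend on Guesser, so the sequence of cards revealed by $\greedy$ is a fixed random process, independent of Guesser's strategy; in particular $S_t$, the number of distinct card types present just before round $t$, and $R_n$, the number of rounds before the first card type is exhausted, are fixed random variables. In a round with $s$ types present, $\greedy$ reveals a uniformly random one of them, so Guesser's success probability that round is $0$ if they name an absent type and $s^{-1}$ otherwise; an absent type exists precisely when $S_t<n$. Hence $\max_{\c G}\E[C_{m,n}(\c G,\greedy)]=\E\bigl[\sum_t S_t^{-1}\bigr]$ and $\min_{\c G}\E[C_{m,n}(\c G,\greedy)]=\tfrac1n\E[R_n]$, and by Theorem~\ref{thm:greedy} these also equal $\min_{\c S}\max_{\c G}$ and $\max_{\c S}\min_{\c G}$ respectively. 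So it remains to prove $\E\bigl[\sum_t S_t^{-1}\bigr]\sim\log n$ and $\tfrac1n\E[R_n]\sim\Gam(1+\rec m)(m!)^{1/m}n^{-1/m}$.

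The minimizing bound \eqref{eq:mainMax} is essentially immediate: until the first type is exhausted all $n$ types are present, so each round $\greedy$ reveals a type chosen uniformly and independently from all $n$ types, and $R_n$ is exactly the number of i.i.d.\ uniform draws from an $n$-element set needed until some element has been drawn $m$ times. This is the ($m$-fold) birthday problem statistic, whose expectation is known to be asymptotic to $\Gam(1+\rec m)(m!)^{1/m}\,n^{1-1/m}$; dividing by $n$ yields \eqref{eq:mainMax}.

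For \eqref{eq:mainMin} the key step is a Poissonization of the greedy process: give each of the $n$ card types an independent rate-$1$ Poisson clock and let that type be revealed at the first $m$ ticks of its clock (treating later ticks as no-ops). By the memorylessness of the exponential, at any instant with $s$ types still present the next reveal occurs after an $\mathrm{Exp}(s)$ wait and is a uniformly random one of the $s$ types, so the time-ordered sequence of reveals has exactly the distribution of the greedy process. The process terminates at time $T=\max_i T_i$, where $T_i$ is the time of the $m$-th tick of the $i$-th clock, so that $T_1,\dots,T_n$ are i.i.d.\ $\mathrm{Gamma}(m,1)$; on the other hand $T$ is the sum of the $mn$ inter-reveal waiting times, the one preceding round $t$ having conditional mean $S_t^{-1}$ given the past, whence $\E[T]=\E\bigl[\sum_t S_t^{-1}\bigr]$. (Equivalently, deleting the no-op ticks shows $\E\bigl[\sum_t S_t^{-1}\bigr]$ equals $\tfrac1n$ times the expected number of i.i.d.\ uniform draws needed to see every one of $n$ elements at least $m$ times.) Since the $\mathrm{Gamma}(m,1)$ right tail is exponential up to polynomial factors, standard extreme-value analysis (equivalently, the $m$-coverage coupon-collector asymptotic $\sim n\log n$ for fixed $m$) gives $\E[\max_i T_i]\sim\log n$, proving \eqref{eq:mainMin}.

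I expect the only real content beyond bookkeeping to be the Poissonization identity $\E\bigl[\sum_t S_t^{-1}\bigr]=\E[\max_i T_i]$ — i.e.\ checking that the Poissonized reveal sequence really is distributed as the greedy process and that the inter-reveal waiting times have the claimed conditional means — together with locating the precise forms of the $m$-fold birthday and $m$-coverage coupon-collector expectation asymptotics that are to be quoted; the extreme-value estimates for $\max$ and $\min$ of i.i.d.\ Gamma variables are then routine. A minor point to verify carefully is that the reductions in the first paragraph are genuinely optimal for Guesser, which holds precisely because $\greedy$ ignores Guesser, so Guesser can neither influence $S_t$ nor beat the per-round success probability $S_t^{-1}$, and can force $0$ as soon as $S_t<n$.
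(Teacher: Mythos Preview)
Your proposal is correct. For \eqref{eq:mainMax} your argument is essentially identical to the paper's: both reduce to $\tfrac1n\E[R_n]$ with $R_n$ the $m$-fold birthday waiting time and quote the Klamkin--Newman asymptotic.

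For \eqref{eq:mainMin} your route genuinely differs from the paper's. You first write the optimum as the strategy-free quantity $\E\bigl[\sum_t S_t^{-1}\bigr]$ and then evaluate it via Poissonization, obtaining the clean identity $\E\bigl[\sum_t S_t^{-1}\bigr]=\E[\max_i T_i]$ with $T_i$ i.i.d.\ $\mathrm{Gamma}(m,1)$ (equivalently, $\tfrac1n$ times the $m$-coverage coupon-collector expectation), after which $\sim\log n$ is immediate from standard extreme-value or coupon-collector asymptotics. The paper instead fixes the particular optimal Guesser strategy $\c G^+$ that always guesses a highest-multiplicity type, decomposes the correct guesses as $\sum_k C_k$ according to the multiplicity of the guessed card, shows $\E[C_m]=\log n+O(1)$ directly, and bounds each $\E[C_k]$ for $k<m$ by $\log \E[V_{k+1,k}]+O(1)$, finally controlling $\E[V_{k+1,k}]$ via the ``brothers'' coupon-collector result of Foata--Han--Lass to get $\E[C_k]=O(\log\log n)$. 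Your argument is shorter and yields a closed-form identity for the optimum; the paper's argument is less direct but produces finer structural information, namely that the entire $\log n$ comes from rounds where the guessed card still has full multiplicity $m$, with all other multiplicities contributing only $O(\log\log n)$ each. Your observation that $\greedy$ ignores Guesser (hence any in-support guess is optimal each round) is exactly what the paper packages as Lemma~\ref{lem:reduce}.
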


In particular, comparing Theorem~\ref{thm:main} with Theorem~\ref{thm:old complete},  we see that Shuffler can significantly decrease or increase the expected number of correct guesses if they use the greedy strategy as opposed to shuffling the deck uniformly at random.

Lastly, we consider a variant of the Guesser-Shuffler game that we call the \textit{Restricted Matching Pennies game}.  This is a two player game which starts with both players being given an identical deck of cards.  Each round the two players simultaneously select a card from their deck, reveal their cards, and then discard them.  This continues until the two decks are depleted.   We define $M_{m,n}(\c{A},\c{B})$ to be the number of times that the selected cards matched during a round of the game if the deck starts with $m$ copies of $n$ different card types and if the two players use strategies $\c{A}$ and $\c{B}$.

One can view this game as a symmetric version of the Guesser-Shuffler game where Guesser is forced to guess each card type exactly $m$ times (with Guesser's deck recording the remaining guesses that they are allowed to make).  This can also be seen as a variant of the classical game ``Matching Pennies.''  In each round of this game, two players simultaneously reveal a penny from their hand, where one player gets a point if the coins match (i.e. if they are both heads or both tails), and the other gets a point if they do not match.  In contrast, the Restricted Matching Pennies game allows $n$ different types objects to be chosen from each round, and it adds the restriction that the number of times each object is chosen in total is fixed.

It turns out that in the Restricted Matching Pennies game that the optimal strategy for both players is to shuffle their decks uniformly at random, which is in sharp contrast to Theorem~\ref{thm:greedy}.

\begin{prop}\label{prop:twoDecks}
	For all $m,n$ we have
	\[\max_{\c{A}}\min_{\c{B}}\E[M_{m,n}(\c{A},\c{B})]=m.\]
	Moreover, if $\c{U}$ is the strategy of choosing each card from the remaining deck uniformly at random, then for any strategies $\c{A},\c{B}$ we have
	\[\E[M_{m,n}(\c{A},\c{U})]=\E[M_{m,n}(\c{U},\c{B})]=m.\]
\end{prop}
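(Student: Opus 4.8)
The plan is to prove the key identity $\E[M_{m,n}(\c{U},\c{B})]=m$ for every strategy $\c{B}$ (and symmetrically $\E[M_{m,n}(\c{A},\c{U})]=m$), since this immediately gives the min-max value: against $\c{U}$ no strategy for $\c{B}$ can do better or worse than $m$, so $\min_{\c{B}}\E[M_{m,n}(\c{U},\c{B})]=m$, hence $\max_{\c{A}}\min_{\c{B}}\E[M_{m,n}(\c{A},\c{B})]\ge m$; and symmetrically using $\c{A}=\c{U}$ gives $\min_{\c{B}}\E[M_{m,n}(\c{A},\c{B})]\le m$ for the particular choice $\c{A}=\c{U}$, forcing equality. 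So the whole proposition reduces to the single computation $\E[M_{m,n}(\c{U},\c{B})]=m$.

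To prove that identity, I would argue round by round using linearity of expectation. Write $M_{m,n}(\c{U},\c{B})=\sum_{t} X_t$ where $X_t$ is the indicator that the two selected cards match in round $t$. Condition on the entire history up to the start of round $t$ — in particular on the multiset of cards remaining in player $\c{B}$'s deck and on $\c{B}$'s (possibly random) choice $b_t$ for this round. The crucial observation is that player $\c{U}$'s remaining deck, conditioned only on this history, is a uniformly random ordering of whatever multiset of cards $\c{U}$ still holds; in fact the card $\c{U}$ plays in round $t$ is uniform over $\c{U}$'s current remaining multiset. Here I would use the standard fact (and the footnote's remark that the online and offline shufflings are equivalent) that "choose uniformly at random from the remaining deck each round" is the same as "shuffle uniformly once, then reveal in order," so that at round $t$ player $\c{U}$'s card is uniform over its $m n - t + 1$ remaining cards, independently of $\c{B}$'s behavior. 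Therefore $\Pr[X_t = 1 \mid \text{history}] = r_t(b_t)/(mn - t + 1)$, where $r_t(i)$ is the number of copies of card type $i$ still in $\c{U}$'s deck at the start of round $t$.

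Now I would sum: $\E[M_{m,n}(\c{U},\c{B})] = \sum_t \E\l[ r_t(b_t)/(mn-t+1)\r]$. The cleanest way to evaluate this is to swap the order of summation and instead track, for each card type $i$, the total contribution across all rounds. Each of the $m$ copies of type $i$ in $\c{U}$'s deck gets "used up" exactly once over the course of the game, at some (random) round; the indicator that $\c{B}$'s round-$t$ card equals $i$, weighted by $1/(mn-t+1)$ and by how many copies of $i$ remain, should telescope. Concretely, $\sum_i \E\big[\sum_t r_t(i)\1[b_t=i]/(mn-t+1)\big]$: at the moment a given copy of type $i$ is removed from $\c{U}$'s deck (round $t_0$, say), it has contributed to the count in rounds up to and including $t_0$ — actually the cleaner bookkeeping is to note that $\sum_t r_t(b_t)/(mn-t+1)$ has the same distribution regardless of $\c{B}$, because by symmetry of the uniform shuffle we may as well reveal $\c{U}$'s cards in a fixed order, and then a direct induction on $n$ (or on the deck size) shows the expected number of matches equals $m$: removing one matched pair reduces to the same problem with parameters giving expected value $m$ minus the contribution already counted. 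The main obstacle is making this last bookkeeping rigorous while $\c{B}$ is allowed to be adaptive and randomized; I expect the cleanest route is the induction on total deck size, peeling off round $1$ (where the match probability is exactly $\E[r_1(b_1)]/(mn) = m/(mn)\cdot(\text{something})$ — careful, $r_1(i)=m$ for all $i$ so it is exactly $1/n$), then observing that after round $1$ both decks again consist of full multisets minus one card each, and invoking the inductive hypothesis after a short argument that the residual game still has expected value accounting for the $m$ total. I would present the induction with the invariant "expected total matches $=$ (number of full copies guaranteed to remain) summed appropriately," which for the balanced starting configuration evaluates to $m$.
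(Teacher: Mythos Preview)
Your reduction of the min-max statement to the single identity $\E[M_{m,n}(\c{A},\c{U})]=\E[M_{m,n}(\c{U},\c{B})]=m$ is correct, as is the round-by-round setup giving $\Pr[X_t=1\mid\text{history}]=r_t(b_t)/(mn-t+1)$. The gap is in closing the computation. First, the aside that $\sum_t r_t(b_t)/(mn-t+1)$ has the same \emph{distribution} regardless of $\c{B}$ is false (only the expectation is invariant), since $\c{B}$ is adaptive and $b_t$ depends on $\c{U}$'s earlier plays. More importantly, your proposed induction on deck size cannot close with the invariant ``expected matches $=m$ for the balanced deck'': after one round the two decks are generically \emph{unbalanced}---if $\c{U}$ removes a copy of type $j$ and $\c{B}$ removes a copy of type $i\ne j$, the residual decks have their deficits in different coordinates, and there is no smaller instance of the original $(m,n)$ problem to fall back on. The phrase ``removing one matched pair reduces to the same problem'' ignores the (typical) case $i\ne j$, and the suggested invariant ``number of full copies guaranteed to remain, summed appropriately'' is too vague to carry the induction.

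The paper's fix is precisely to strengthen the inductive hypothesis: it proves (Proposition~\ref{prop:strongTwoDecks}) that for \emph{arbitrary} decks $a,b$ with $\sum_i a_i=\sum_i b_i=N$ one has $\E[M(a,b,\c{A},\c{U})]=\sum_i a_ib_i/N$. This quantity does survive one round of play: if $\c{A}$ plays $i$ and $\c{U}$ plays $j$ with probability $b_j/N$, a short calculation shows that $\1_{i=j}+\sum_k (a-\del_i)_k(b-\del_j)_k/(N-1)$ averages over $j$ to exactly $\sum_k a_kb_k/N$, independently of $i$. Specializing to $a=b=(m,\ldots,m)$ then gives $m$. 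Equivalently, one can recast this as the statement that $\sum_k r_t(k)s_t(k)/(mn-t+1)$ plus the running match count is a martingale. Either way, the missing ingredient in your argument is identifying the correct potential function $\sum_k a_kb_k/N$ for unbalanced decks.
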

%This result shows that the uniform shuffling strategy $\c{U}$ is optimal for either player, but a straightforward argument shows that it will not be unique.

The rest of this paper is organized as follows.  In Section~\ref{sec:optimal} we prove Theorem~\ref{thm:greedy} showing that the greedy strategy is optimal in the Guesser-Shuffler game.  We then prove the asymptotic bounds of Theorem~\ref{thm:main} in Section~\ref{sec:asy} and the results for the Restricted Matching Pennies game in Section~\ref{sec:two}.  Concluding remarks and some open problems are given in Section~\ref{sec:concluding}.

\section{Proof of Theorem~\ref{thm:greedy}}\label{sec:optimal}

To prove our results, we first formally define some of the terms used in the introduction.

We say that $d$ is an \textit{$n$-vector} if it is a vector $(d_1,\ldots,d_n)$ of non-negative integers, and we think of $d_i$ as the number of card types of type $i$ left in a deck of cards.  We define the support of $d$ to be $\supp(d)=\{i:d_i>0\}$, and if $\b{X}$ is a random variable we define its support by $\supp(\b{X})=\{x:\Pr[\b{X}=x]>0\}$.   

We say that $\c{G}$ is a \textit{Guesser strategy} if it is a function which takes in nonzero $n$-vectors $d$ (for all $n$) and which outputs a random variable $\c{G}(d)$ whose support is in $[n]$.  We say that $\c{S}$ is a \textit{Shuffler strategy} if it is a function which takes in nonzero $n$-vectors $d$ (for all $n$) and outputs a random variable $\c{S}(d)$ whose support is in $\supp(d)$.  Intuitively, these definitions say that Guesser is allowed to guess any card type in $[n]$ while Shuffler must select a card type that is still in the deck.  For all nonzero $d$ we define the greedy strategy $\greedy$ by
\[\Pr[\greedy(d)=i]=|\supp(d)|^{-1}\hspace{1em} \forall i\in \supp(d).\]

When $n$ is understood, we let $\del_i$ be the $n$-vector which has a 1 in position $i$ and 0's everywhere else.  We recursively define the score $C(d,\c{G},\c{S})$ where $d$ is an $n$-vector and $\c{G}/\c{S}$ are Guesser/Shuffler strategies by having $C(d,\c{G},\c{S})=0$ if $d=(0,\ldots,0)$, and otherwise having \begin{equation}C(d,\c{G},\c{S})=1_{\c{S}(d)=\c{G}(d)}+C(d-\del_{\c{S}(d)},\c{G},\c{S}).\label{eq:Cdef}\end{equation}   
That is, if Shuffler chooses some $i=\c{S}(d)$, then a point is scored if Guesser chooses $i$ as well, and in either case the game continues with the deck $d-\del_i$ since one copy of $i$ has been removed.

For $d$ an $n$-vector and $\c{S}$ a Shuffler strategy, define \begin{align*}&f(d,\c{S})=\max_{\c{G}} \E[C(d,\c{G},\c{S})] &f(d)=\min_{\c{S}} f(d,\c{S})\\ &F(d,\c{S})=\min_{\c{G}} \E[C(d,\c{G},\c{S})] &F(d)=\max_{\c{S}} F(d,\c{S}),\end{align*} 
where the maximum and minimums run through all possible Shuffler and Guesser strategies as appropriate.  Intuitively, $f(d,\c{S})$ is the score of the game when Guesser tries to maximize their score given that the deck starts as $d$ and Shuffler uses strategy $\c{S}$, with the other functions having analogous interpretations. The fact that these maximums and minimums exist is due to the fact that every two player finite game has a Nash equilibrium.   If $\c{S}$ is such that $f(d)=f(d,\c{S})$ for all $n$-vectors $d$ of any length $n$, then we say that $\c{S}$ is a \textit{minimizing Shuffler strategy}, and similiary we define what it means for $\c{S}$ to be a \textit{maximizing Shuffler strategy}.  

The key lemma that we use throughout this section is the following recurrence relation.

\begin{lem}\label{lem:recurrence}
	Let $\c{S}$ be a Shuffler strategy.  If $d$ is an $n$-vector and $p_i=\Pr[\c{S}(d)=i]$, then
	\[f(d,\c{S})=\max_{i\in [n]} p_i+\sum_{j\in [n]} p_j f(d-\del_j,\c{S}),\]
	\[F(d,\c{S})=\min_{i\in [n]} p_i+\sum_{j\in [n]} p_j f(d-\del_j,\c{S}).\]
	%Moreover, Guesser strategies $\c{G},\c{G}'$ satisfy $f(d,\c{S})=\E[C(d,\c{G},\c{S})]$ and $F(d,\c{S})=\E[C(d,\c{G}',\c{S})$ for all $d$ if and only if $\supp(\c{G}(d)),\supp(\c{G}'(d))$ are subsets of the card types $i$ with $p_i=\max p_j$ and $p_i=\min p_j$, respectively.
\end{lem}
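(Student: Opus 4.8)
The plan is to peel off the first round using the recursion \eqref{eq:Cdef}, and then to handle the remaining game by induction on $\|d\|_1:=\sum_i d_i$, exploiting the fact that against a frozen Shuffler strategy the game becomes a one-player dynamic program, so that a single Guesser strategy can be made optimal from every sub-deck at once.

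Fix $\c{S}$ and a nonzero $n$-vector $d$; write $p_i=\Pr[\c{S}(d)=i]$ (so $p_i=0$ for $i\notin\supp(d)$), and for a Guesser strategy $\c{G}$ write $q_i=\Pr[\c{G}(d)=i]$. Since the randomness of $\c{G}(d)$ is independent of that of $\c{S}(d)$, and the randomness driving $C(d-\del_j,\c{G},\c{S})$ is independent of $\c{S}(d)$, taking expectations in \eqref{eq:Cdef} gives
\[\E[C(d,\c{G},\c{S})]=\sum_{j\in[n]}p_jq_j+\sum_{j\in\supp(d)}p_j\,\E[C(d-\del_j,\c{G},\c{S})].\]
These two sums can be optimized over $\c{G}$ independently: the first depends on $\c{G}$ only through $(q_j)$, i.e.\ through $\c{G}(d)$, whereas each $\E[C(d-\del_j,\c{G},\c{S})]$ depends on $\c{G}$ only through its values on $n$-vectors $e$ with $e\le d-\del_j$ coordinatewise (hence $e\neq d$). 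As $(q_j)$ ranges over all distributions on $[n]$, the value $\sum_j p_jq_j$ ranges exactly over the interval $[\min_{i\in[n]}p_i,\,\max_{i\in[n]}p_i]$, with the endpoints attained by point masses; this is the source of the $\max_i p_i$ (resp.\ $\min_i p_i$) term.

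For the tail I would show $\max_{\c{G}}\sum_j p_j\E[C(d-\del_j,\c{G},\c{S})]=\sum_j p_j f(d-\del_j,\c{S})$ (and likewise with $\min$ and $F$). The inequality ``$\le$'' is immediate, termwise, from the definition of $f$. For ``$\ge$'' I would exhibit one strategy that is optimal everywhere: let $\c{G}^{\max}$ guess, on every deck $e$, a point mass supported in $\arg\max_{i\in[n]}\Pr[\c{S}(e)=i]$, and prove by induction on $\|e\|_1$ that $\E[C(e,\c{G}^{\max},\c{S})]=f(e,\c{S})$ for every $e$ (the base case $\|e\|_1=0$ is trivial since $C((0,\dots,0),\cdot,\cdot)\equiv 0$). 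In the inductive step, applying the one-step expansion above to $\c{G}^{\max}$ and using the inductive hypothesis on the smaller decks $e-\del_j$ gives $\E[C(e,\c{G}^{\max},\c{S})]=\max_i p_i+\sum_j p_j f(e-\del_j,\c{S})$, while the one-step expansion for an arbitrary $\c{G}$ together with $\sum_j p_jq_j\le\max_i p_i$ and $\E[C(e-\del_j,\c{G},\c{S})]\le f(e-\del_j,\c{S})$ gives $f(e,\c{S})\le\max_i p_i+\sum_j p_j f(e-\del_j,\c{S})$; the two displays pinch, proving simultaneously that $\c{G}^{\max}$ is optimal from $e$ and that the $f$-recurrence holds at $e$. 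Running the same argument with the strategy $\c{G}^{\min}$ that guesses a point mass in $\arg\min_{i\in[n]}\Pr[\c{S}(e)=i]$ (which, when $\supp(e)\neq[n]$, simply guesses a card absent from the deck) yields the companion identity, with $F(d-\del_j,\c{S})$ in place of $f(d-\del_j,\c{S})$ in the sum. Combining the two sums from the previous paragraph then gives both displayed equations.

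The one genuinely delicate point is the interchange $\max_{\c{G}}\sum_j p_j\E[C(d-\del_j,\c{G},\c{S})]=\sum_j p_j\max_{\c{G}}\E[C(d-\del_j,\c{G},\c{S})]$: a priori the strategies optimal from the various decks $d-\del_j$ could conflict on decks reachable from more than one of them. Defining $\c{G}^{\max}$ (and $\c{G}^{\min}$) \emph{locally}, deck by deck, and observing that optimality of $\c{G}^{\max}$ from $e$ depends only on its already-fixed values on strictly smaller decks, is exactly what lets the induction on $\|\cdot\|_1$ bypass this issue; it is simply the Bellman principle for the one-player problem obtained by freezing $\c{S}$. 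Everything else is a routine expectation calculation.
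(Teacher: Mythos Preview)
Your proof is correct and takes essentially the same approach as the paper: expand one round via \eqref{eq:Cdef}, bound, and then verify by induction on $\sum_i d_i$ that the strategy guessing a point mass in $\arg\max_i \Pr[\c{S}(e)=i]$ (resp.\ $\arg\min$) on every deck $e$ attains the bound; your write-up is just more explicit about why the max and the sum may be optimized independently. You also correctly note that the second display should have $F(d-\del_j,\c{S})$ rather than $f(d-\del_j,\c{S})$ in the sum, which is a typo in the statement as printed.
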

%\SS{Maybe prove a stronger version of this which says that $\c{G}$ maximizes/minimizes iff its support lies within the set of most/least likely cards.}
\begin{proof}
	Given any Guesser strategy $\c{G}$, we have by \eqref{eq:Cdef} that
	\[\E[C(d,\c{G},\c{S})]=\Pr[\c{S}(d)=\c{G}(d)]+\sum p_j\cdot \E[C(d-\del_j,\c{G},\c{S})]\le \max p_i+\sum p_j\cdot f(d-\del_j,\c{S}),\]
	and taking a maximum over $\c{G}$ shows that 
	\[f(d,\c{S})=\max_{\c{G}} \E[C(d,\c{G},\c{S})]\le \max p_i+\sum p_j\cdot f(d-\del_j,\c{S}).\]
	Moreover, it is straightforward to prove by induction on $\sum d_i$ that the strategy $\c{G}$ which always guesses some $k$ with $p_k=\max p_i$ achieves this upper bound.  The proof for $F(d,\c{S})$ is essentially the same and we omit the details.
\end{proof}

Another key lemma we make use of is the following, which intuitively says that if $\c{S},\c{S}'$ are two strategies which act the same on $d$ as well as on any deck which is ``smaller'' than $d$, then these two strategies give the same expected score if we start with $d$.
\begin{lem}\label{lem:StratSteal}
	Let $\c{S},\c{S}'$ be two shuffler strategies and $d$ an $n$-vector such that $\c{S}(d')=\c{S}'(d')$ for all $n$-vectors $d'$ which satisfy $d_i'\le d_i$ for all $i$.  Then 
	\[f(d,\c{S})=f(d,\c{S}'),\hspace{2em} F(d,\c{S})=F(d,\c{S}').\]
\end{lem}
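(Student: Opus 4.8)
The plan is to prove both identities simultaneously by induction on $N=\sum_i d_i$, using the recurrence of Lemma~\ref{lem:recurrence}. The base case $N=0$ is immediate: then $d=(0,\ldots,0)$, so $C(d,\c{G},\c{S})=0$ for every $\c{G}$ and hence $f(d,\c{S})=f(d,\c{S}')=0$, and likewise $F(d,\c{S})=F(d,\c{S}')=0$.

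For the inductive step, assume the claim for all $n$-vectors (of any length $n$) whose coordinate sum is less than $N$, and let $d$ have coordinate sum $N\ge 1$. Write $p_i=\Pr[\c{S}(d)=i]$. Since $d$ itself satisfies $d_i\le d_i$ for all $i$, the hypothesis gives $\c{S}(d)=\c{S}'(d)$, so $p_i=\Pr[\c{S}'(d)=i]$ for every $i\in[n]$; in particular $\max_i p_i$ and $\min_i p_i$ are the same whether computed from $\c{S}$ or $\c{S}'$. Now fix $j\in\supp(d)$. Then $d-\del_j$ is again an $n$-vector, with coordinate sum $N-1<N$, and any $n$-vector $d''$ with $d''_i\le (d-\del_j)_i$ for all $i$ automatically satisfies $d''_i\le d_i$ for all $i$; hence $\c{S}$ and $\c{S}'$ agree on $d-\del_j$ and on every deck below it, so the inductive hypothesis applies to $d-\del_j$ and yields $f(d-\del_j,\c{S})=f(d-\del_j,\c{S}')$ and $F(d-\del_j,\c{S})=F(d-\del_j,\c{S}')$. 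For $j\notin\supp(d)$ we have $p_j=0$, so the corresponding terms in the sums of Lemma~\ref{lem:recurrence} vanish and may be disregarded. Substituting these equalities into
\[f(d,\c{S})=\max_{i\in[n]}p_i+\sum_{j\in[n]}p_j\,f(d-\del_j,\c{S})\]
and into the analogous formula for $F(d,\c{S})$ gives $f(d,\c{S})=f(d,\c{S}')$ and $F(d,\c{S})=F(d,\c{S}')$, completing the induction.

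The argument is essentially bookkeeping, and I do not anticipate a genuine obstacle. The only point that needs care is justifying that the inductive hypothesis is legitimately available for each smaller deck $d-\del_j$: this hinges on the observation that the componentwise inequality $d''\le d-\del_j$ forces $d''\le d$, so the property ``agrees on $d$ and on all decks below it'' is inherited by $d-\del_j$. Everything else follows mechanically from the fact that the right-hand sides in Lemma~\ref{lem:recurrence} depend on $\c{S}$ only through the distribution $\Pr[\c{S}(d)=\cdot]$ and through the numbers $f(d-\del_j,\c{S})$, both of which are pinned down by the hypothesis and the inductive step.
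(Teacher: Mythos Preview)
Your proof is correct and follows essentially the same inductive approach as the paper: both argue by induction (equivalently, minimal counterexample) on $\sum_i d_i$, using that the hypothesis is inherited by each $d-\del_j$. The only cosmetic difference is that the paper establishes the slightly stronger equality $\E[C(d,\c{G},\c{S})]=\E[C(d,\c{G},\c{S}')]$ for \emph{every} Guesser strategy $\c{G}$ via the raw recursion \eqref{eq:Cdef} and then takes extrema, whereas you work directly with $f$ and $F$ through Lemma~\ref{lem:recurrence}; your route is a bit more streamlined for the stated conclusion.
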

\begin{proof}
	We prove the more general result that, for $d,\c{S},\c{S}'$ as in the hypothesis, we have \begin{equation}\E[C(d,\c{G},\c{S})]=\E[C(d,\c{G},\c{S}')]\label{eq:StratSteal}\end{equation} 
	for all Guesser strategies $\c{G}$.  With this the stated result following by taking maximum/minimums with respect to this equality.  
	
	To prove this general result, assume for the sake of contradiction that there is some $d$ and strategies $\c{S},\c{S}',\c{G}$ that satisfy the hypothesis of the lemma and such that \eqref{eq:StratSteal} does not hold, and choose such an example with $\sum d_i$ as small as possible.  Observe that $d-\del_i$ for $i\in \supp(d)$ together with $\c{S},\c{S}'$ also satisfies the hypothesis of the lemma since $d$ does, so by the minimality of $d$ we must have $\E[C(d-\del_i,\c{G},\c{S})]=\E[C(d-\del_i,\c{S}',\c{G})]$ for all such $i$.  The desired equality now follows from the recursion \eqref{eq:Cdef}.
\end{proof}

The next result shows the intuitive fact that if, say, $d=(100,1)$ and Shuffler is trying to minimize their score, then it would be better for them to draw a 1 than a 2 (since the latter will guarantee 100 correct guesses from Guesser).

\begin{lem}\label{lem:smaller}
	Let $d$ be an $n$-vector.  If $d_i\ge d_j>0$, then \[f(d-\del_i)\le f(d-\del_j),\hspace{2em} F(d-\del_i)\ge F(d-\del_j).\]
\end{lem}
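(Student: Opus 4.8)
The plan is to prove both inequalities simultaneously by strong induction on $\sum_k d_k$, after first reformulating $f$ and $F$ as optimizations over a single probability vector rather than over Shuffler strategies. First I would dispose of the degenerate cases. If $i=j$ there is nothing to prove. If $d_i=d_j$, then $d-\del_i$ and $d-\del_j$ are obtained from each other by the transposition of coordinates $i$ and $j$; since the rules of the Guesser-Shuffler game do not refer to the names of the card types, relabeling induces score-preserving bijections on strategies, so $f$ and $F$ are invariant under permutations of coordinates and the claim is an equality. It remains to treat the case $d_i>d_j\ge 1$; here $d_i\ge 2$ and $\sum_k d_k\ge 3$, and the induction is anchored trivially because $d_i>d_j\ge 1$ is impossible when $\sum_k d_k\le 2$.

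The key preliminary is the value formula
\[
f(d)=\min_{p}\left(\max_{i} p_i+\sum_{j} p_j\, f(d-\del_j)\right),\qquad F(d)=\max_{p}\left(\min_{i} p_i+\sum_{j} p_j\, F(d-\del_j)\right),
\]
where $p$ ranges over probability distributions supported on $\supp(d)$ and, in each sum, $j$ ranges over $\supp(d)$. One direction ($\ge$ for $f$, $\le$ for $F$) is immediate from Lemma~\ref{lem:recurrence} together with the definitions of $f(d)$ and $F(d)$ as optima over Shuffler strategies. For the reverse direction one chooses an optimal $p$, plays an optimal Shuffler strategy on each of the decks $d-\del_j$, and combines these into a single Shuffler strategy (here Lemma~\ref{lem:StratSteal} lets one modify a strategy on sub-decks without affecting the value at $d$). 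Equivalently, Lemmas~\ref{lem:recurrence} and~\ref{lem:StratSteal} produce minimizing and maximizing Shuffler strategies that are optimal on every deck at once, and the value formula is the corresponding Bellman equation.

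For the inductive step, assume the lemma for all $n$-vectors of total less than $\sum_k d_k$, with $d_i>d_j\ge 1$. Since $d_i\ge 2$, removing a copy of type $i$ cannot empty type $i$, whereas removing a copy of type $j$ might empty type $j$; hence $\supp(d-\del_j)\subseteq\supp(d-\del_i)$, and in particular $d-\del_i-\del_k$ is a legitimate $n$-vector for every $k\in\supp(d-\del_j)$. For each such $k$ put $e:=d-\del_k$; using $d_i>d_j$ and, when $k=j$, the fact that $k\in\supp(d-\del_j)$ forces $d_j\ge 2$, one checks that $e_i\ge e_j>0$, so the induction hypothesis applied to $e$ gives
\[
f(d-\del_i-\del_k)\le f(d-\del_j-\del_k),\qquad F(d-\del_i-\del_k)\ge F(d-\del_j-\del_k).
\]
Therefore, for every distribution $p$ supported on $\supp(d-\del_j)$, a term-by-term comparison of the value-formula expressions yields
\[
\max_{k} p_k+\sum_{k} p_k\, f(d-\del_i-\del_k)\ \le\ \max_{k} p_k+\sum_{k} p_k\, f(d-\del_j-\del_k).
\]
Because every distribution on $\supp(d-\del_j)$ is also a distribution on $\supp(d-\del_i)$, taking the minimum of the left-hand side over the larger family of distributions on $\supp(d-\del_i)$ only decreases it; applying the value formula to both sides gives $f(d-\del_i)\le f(d-\del_j)$. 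The bound $F(d-\del_i)\ge F(d-\del_j)$ follows by the identical argument, with the outer minimum replaced by a maximum, the inner maximum by a minimum, and all inequalities reversed.

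I expect the main obstacle to be exactly what forces the detour through the value formula: one cannot compare the recursions of Lemma~\ref{lem:recurrence} for $f(d-\del_i)$ and $f(d-\del_j)$ directly, since the optimal Shuffler strategies for the two decks bear no obvious relationship. The Bellman reformulation trades ``optimal Shuffler strategy'' for ``optimal probability vector'', and once that is in place the argument hinges only on the support containment $\supp(d-\del_j)\subseteq\supp(d-\del_i)$ --- which is precisely where the asymmetric hypothesis $d_i\ge d_j$ enters --- together with the one-step monotonicity handed down by the induction hypothesis. The remaining steps (the easy direction of the value formula, the gluing argument, and the verification of $e_i\ge e_j>0$ in the sub-cases $k\notin\{i,j\}$, $k=i$, $k=j$) are routine.
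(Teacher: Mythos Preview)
Your proof is correct and follows essentially the same approach as the paper: both argue by induction on $\sum_k d_k$, handle $d_i=d_j$ by symmetry, and in the strict case transplant the optimal first-round distribution from $d-\del_j$ to $d-\del_i$ (using $\supp(d-\del_j)\subseteq\supp(d-\del_i)$) before applying the induction hypothesis termwise. The only cosmetic difference is that you isolate the Bellman value formula as a preliminary, whereas the paper achieves the same effect by directly constructing the modified strategy $\c{S}'$ via Lemmas~\ref{lem:recurrence} and~\ref{lem:StratSteal}.
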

\begin{proof}
	Assume for contradiction that there exists such a $d$ with $f(d-\del_i)>f(d-\del_j)$, and choose this $d$ so that $\sum d_k$ is as small as possible.  Observe that if $d_i=d_j$, then the entries of $d-\del_i$ and $d-\del_j$ are permutations of one another, so by the symmetry of the problem we see that $f(d-\del_i)=f(d-\del_j)$.  Thus we can assume $d_i>d_j$, and in particular $d_i\ge 2$.

	Let $\c{S}$ be a minimizing Shuffler strategy and define $\c{S}'$ by having $\c{S}'(d-\del_i)=\c{S}(d-\del_j)$ and $\c{S}'(d')=\c{S}(d')$ for all other $d'$.  Observe that this is indeed a Shuffler strategy because 
	\[\supp(\c{S}'(d-\del_i))=\supp(\c{S}(d-\del_j))\sub \supp(d-\del_j)\sub \supp(d)=\supp(d-\del_i),\]
	where the last equality used $d_i\ge 2$.  Let $p_k=\Pr[\c{S}(d-\del_j)=k]$.  We claim that the following is a consequence of Lemmas~\ref{lem:recurrence} and \ref{lem:StratSteal}:
	\begin{equation}f(d-\del_i)-f(d-\del_j)\le f(d-\del_i,\c{S}')-f(d-\del_j,\c{S})=\sum_{k\in \supp(d-\del_j)} p_k(f(d-\del_i-\del_k)-f(d-\del_j-\del_k)).\label{eq:fromLemmas}\end{equation}
	In more detail, we have $f(d-\del_i)\le f(d-\del_i,\c{S}')$ because $f$ is a minimum over all possible Shuffler strategies, and $f(d-\del_j)=f(d-\del_j,\c{S})$ because $\c{S}$ is a minimizing Shuffler strategy.  This gives the inequality of \eqref{eq:fromLemmas}.  For the equality, we expand  $f(d-\del_i,\c{S}')$ and $f(d-\del_j,\c{S})$ using Lemma~\ref{lem:recurrence}, noting that $\max_{k\in [n]} \Pr[\c{S}'(d-\del_i)=k]=\max_{k\in [n]} p_k$ and
	\[f(d-\del_i-\del_k,\c{S}')=f(d-\del_i-\del_k,\c{S})=f(d-\del_i-\del_k),\]
	where the first equality uses Lemma~\ref{lem:StratSteal} and the second that $\c{S}$ is a minimizing strategy.  This proves \eqref{eq:fromLemmas}.
	
	Because $d_i>d_j$, we have $(d-\del_k)_i\ge (d-\del_k)_j$ for all $k$, so by the minimality of our choice of $d$ we have $f(d-\del_k-\del_i)\le f(d-\del_k-\del_j)$ for all $k$.  Applying this to each term of \eqref{eq:fromLemmas} gives $f(d-\del_i)\le f(d-\del_j)$, a contradiction to our assumption on $d$.  This implies the result for $f$, and an analogous argument gives the result for $F$.
\end{proof}

In order for the greedy strategy $\greedy$ to be a minimizing Shuffler strategy, it is necessary that $f(d-\del_i)\le 1+f(d-\del_j)$ for all $d$ and $i,j\in \supp(d)$.  Indeed if this failed for some $d$, then it would be better for Shuffler to deterministically choose $j$ (giving up a point) than to use any strategy which could draw $i$.  The following shows that a generalization of this condition is true if we inductively assume that the greedy strategy is optimal for all smaller decks.
\begin{lem}\label{lem:main}
	Let $d$ be an $n$-vector such that $f(d',\greedy)=f(d')$ for all $d'\ne d$ which satisfy $d'_k\le d_k$ for all $k\in [n]$.  Then for all $J\sub \supp(d)$ and $i\in \supp(d)$, we have
	\begin{align*}|J|f(d-\del_i)<1+\sum_{j\in J}f(d-\del_j),\\ 
	|J|F(d-\del_i)>1+\sum_{j\in J}F(d-\del_j).\end{align*}
\end{lem}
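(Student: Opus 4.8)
The plan is to reduce the whole statement to a single monotonicity fact about the function $g(e):=f(e,\greedy)$. Write $s=|\supp(d)|$. Since $\Pr[\greedy(d)=k]=s^{-1}$ for $k\in\supp(d)$ and $0$ otherwise (so in particular $\max_{i\in[n]}\Pr[\greedy(d)=i]=s^{-1}$), Lemma~\ref{lem:recurrence} applied to $\greedy$ at $d$ gives $g(d)=s^{-1}+s^{-1}\sum_{k\in\supp(d)}g(d-\del_k)$, which rearranges to
\[ \sum_{k\in\supp(d)}\bigl(g(d)-g(d-\del_k)\bigr)=1 . \]
Moreover, for every $k\in\supp(d)$ the $n$-vector $d-\del_k$ satisfies $d-\del_k\ne d$ and $(d-\del_k)_\ell\le d_\ell$ for all $\ell$, so the hypothesis of the lemma yields $f(d-\del_k)=f(d-\del_k,\greedy)=g(d-\del_k)$. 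Hence the first inequality of the lemma is equivalent to $\sum_{j\in J}\bigl(g(d-\del_i)-g(d-\del_j)\bigr)<1$, and writing each summand as $\bigl(g(d)-g(d-\del_j)\bigr)-\bigl(g(d)-g(d-\del_i)\bigr)$ this becomes
\[ \sum_{j\in J}\bigl(g(d)-g(d-\del_j)\bigr)-|J|\bigl(g(d)-g(d-\del_i)\bigr)<1 . \]
I claim this follows once we know the \emph{strict monotonicity} of $g$, namely $g(e)>g(e-\del_k)$ for every nonzero $n$-vector $e$ and every $k\in\supp(e)$. Indeed, granting this, every term $g(d)-g(d-\del_k)$ with $k\in\supp(d)$ is positive, so $\sum_{j\in J}\bigl(g(d)-g(d-\del_j)\bigr)\le\sum_{k\in\supp(d)}\bigl(g(d)-g(d-\del_k)\bigr)=1$; and since $i\in\supp(d)$ we have $g(d)-g(d-\del_i)>0$, whence for $J\ne\emptyset$ the left-hand side of the last display is at most $1-|J|\bigl(g(d)-g(d-\del_i)\bigr)<1$, and for $J=\emptyset$ it equals $0$.

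It remains to establish the strict monotonicity of $g$, which I would do by strong induction on $\sum_\ell e_\ell$. If $|\supp(e)|=1$, say $e=c\,\del_k$, then $g(e)=c>c-1=g(e-\del_k)$. Otherwise let $s:=|\supp(e)|\ge 2$ and apply Lemma~\ref{lem:recurrence} to $\greedy$ at both $e$ and $e-\del_k$. If $e_k\ge 2$ then $\supp(e-\del_k)=\supp(e)$, and subtracting the two recurrences gives $g(e)-g(e-\del_k)=s^{-1}\sum_{\ell\in\supp(e)}\bigl(g(e-\del_\ell)-g(e-\del_\ell-\del_k)\bigr)$, every summand being positive by the induction hypothesis applied to $e-\del_\ell$ (whose support still contains $k$). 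If $e_k=1$ then $\supp(e-\del_k)$ has size $s-1$, so the recurrence for $g(e-\del_k)$ carries the factor $(s-1)^{-1}$ rather than $s^{-1}$; substituting this recurrence into the one for $g(e)$ and simplifying, the mismatched normalisations cancel and one again arrives at $g(e)-g(e-\del_k)=s^{-1}\sum_{\ell\in\supp(e),\,\ell\ne k}\bigl(g(e-\del_\ell)-g(e-\del_\ell-\del_k)\bigr)>0$. I expect this $e_k=1$ case, where the two differently normalised recurrences have to be reconciled, to be the one genuinely fiddly point; the rest is routine.

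Finally, the inequality for $F$ is obtained by an entirely analogous argument, with the second identity of Lemma~\ref{lem:recurrence} in place of the first and the strict monotonicity of $F(\cdot,\greedy)$ in place of that of $g$. The one extra subtlety is that $F(\cdot,\greedy)$, like $F(\cdot)$, equals $0$ as soon as the support is a proper subset of $[n]$ (Guesser can name a missing card type in every round); consequently the $F$-analogue of the identity $\sum_{k\in\supp(d)}\bigl(g(d)-g(d-\del_k)\bigr)=1$ equals $1$ when $\supp(d)=[n]$ and $0$ otherwise, and in the latter case every $F(d-\del_k)$ vanishes so the inequality is immediate.
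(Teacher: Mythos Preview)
Your argument for the $f$-inequality is correct and takes a genuinely different route from the paper's. The paper argues by contradiction on a minimal counterexample: it first invokes Lemma~\ref{lem:smaller} to reduce to the situation $d_j>d_i$ for all $j\in J$, then splits into the cases $d_i\ge 2$ (where the greedy recurrence is applied directly to both $f(d-\del_i)$ and each $f(d-\del_j)$) and $d_i=1$ (where an auxiliary non-greedy strategy $\c{S}'$ is built on $d-\del_i$ to upper-bound $f(d-\del_i)$, redistributing the missing mass at $i$ onto $J$). By contrast, you never use Lemma~\ref{lem:smaller} or any auxiliary strategy: you observe that the greedy recurrence at $d$ yields the exact identity $\sum_{k\in\supp(d)}\bigl(g(d)-g(d-\del_k)\bigr)=1$, and then the desired strict inequality drops out immediately from strict monotonicity of $g=f(\cdot,\greedy)$, which you prove by an independent induction. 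This is cleaner and more modular: the monotonicity of $g$ is a reusable standalone fact, and the $e_k=1$ case of your induction (where the two differently normalised recurrences telescope) replaces the paper's ad~hoc construction of $\c{S}'$. What the paper's approach buys is that it stays entirely within the language of strategies and never needs to analyse $f(\cdot,\greedy)$ on vectors outside the downward closure of $d$.

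One caution on the $F$-half. Your claim that the argument is ``entirely analogous'' via strict monotonicity of $F(\cdot,\greedy)$ is too quick. The $f$-bound worked because $\sum_{j\in J}(g(d)-g(d-\del_j))\le 1$ and you then \emph{subtracted} the positive quantity $|J|(g(d)-g(d-\del_i))$; for the $F$-inequality the roles of $i$ and $J$ are reversed, so the same bound-then-subtract manoeuvre does not apply symmetrically. (Indeed, the $F$-inequality as literally printed fails already for $J=\{i\}$, so there is evidently a sign slip in the statement; the intended inequality is presumably $1+|J|F(d-\del_i)>\sum_{j\in J}F(d-\del_j)$.) You do correctly flag the key structural difference---that $F(\cdot,\greedy)$ vanishes off full support, so the identity $\sum_k(G(d)-G(d-\del_k))=1$ only holds when $\supp(d)=[n]$---but to complete the $F$-case along your lines you would need a slightly different estimate than the one you used for $f$, not merely the same one with $g$ replaced by $G$. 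The paper also only sketches this half, so this is a minor point rather than a fatal gap.
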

\begin{proof}
	Let $d,i,J$ be as in the hypothesis of the lemma and assume for contradiction that $|J|f(d-\del_i)\ge 1+\sum_{j\in J}f(d-\del_j)$.  Moreover, choose $d$ so that $\sum_k d_k$ is as small as possible, and then choose a set $J$ as small as possible such that this inequality holds for $d$.  By Lemma~\ref{lem:smaller}, if $d_i\ge d_{j'}$ for some $j'\in J$, then $f(d-\del_i)\le f(d-\del_{j'})$.  Thus \[|J|f(d-\del_i)\ge 1+\sum_{j\in J}f(d-\del_j)\implies |J\sm \{j'\}|f(d-\del_i)\ge 1+\sum_{j\in J\sm\{j'\}}f(d-\del_j),\] contradicting the minimality of $J$.  Thus we must have $d_j>d_i>0$ for all $j\in J$, and in particular $\supp(d-\del_j)=\supp(d)$ for all $j\in J$.  Let $s=|\supp(d)|$.
	
	First consider the case that $d_i\ge 2$, so $\supp(d-\del_k)=\supp(d)$ for all $k\in \supp(d)$.  Using this and the hypothesis $f(d-\del_k)=f(d-\del_k,\greedy)$ for all $k$, we find by Lemma~\ref{lem:recurrence} that
	\begin{align*}&|J|f(d-\del_i)-\sum_j f(d-\del_j)=|J|f(d-\del_i,\greedy)-\sum_j f(d-\del_j,\greedy)\\&=s^{-1}\sum_{k\in \supp(d)}\l(|J|f(d-\del_i-\del_k)-\sum_{j\in J} f(d-\del_j-\del_k)\r)<s^{-1}\sum_{k\in \supp(d)} 1=1,\end{align*}
	where the inequality used that each $d-\del_k$ satisfies the hypothesis of the lemma if $d$ does, and that $d$ is a counterexample to the lemma with $\sum d_p$ as small as possible.  This gives the desired inequality.
	
	Thus we can assume $d_i=1$. Consider the strategy $\c{S}'$ which agrees with $\greedy$ on $d'\ne d-\del_i$ and which has \[\Pr[\c{S}'(d-\del_i)=j]=\begin{cases}
		s^{-1}(|J|^{-1}+1) & j\in J,\\ 
		s^{-1} & j\in \supp(d)\sm ( J\cup \{i\}).
	\end{cases}\] 
Note that these probabilities sum to 1 and that this is indeed a Shuffler strategy.  By Lemmas~\ref{lem:recurrence} and \ref{lem:StratSteal} we have
	\begin{align*}|J|f(d-\del_i)&\le |J|f(d-\del_i,\c{S}')\\&=s^{-1}(1+|J|)+s^{-1}\sum_{j\in J}(1+|J|)f(d-\del_i-\del_j)+s^{-1}\sum_{k\in \supp(d)\sm(J\cup \{i\})} |J| f(d-\del_i-\del_k)\\ &=s^{-1}+s^{-1}|J|+s^{-1}\sum_{j\in J} f(d-\del_i-\del_j)+s^{-1}\sum_{k\in \supp(d)\sm\{i\}} |J|f(d-\del_i-\del_k).\end{align*}
	On the other hand, we have
	\begin{align*}\sum_{j\in J} f(d-\del_j)&=s^{-1}|J|+s^{-1}\sum_{j\in J} \l(f(d-\del_j-\del_i)+\sum_{k\in \supp(d)\sm \{i\}} f(d-\del_j-\del_k)\r)\\ &=s^{-1}|J|+s^{-1}\sum_{j\in J}f(d-\del_j-\del_i)+s^{-1}\sum_{k\in \supp(d)\sm \{i\}}\sum_{j\in J} f(d-\del_j-\del_k)\end{align*}
	and subtracting these two expressions gives
	\begin{align*}|J|f(d-\del_i)-\sum_{j\in J} f(d-\del_j)&\le s^{-1}+s^{-1}\sum_{k\in \supp(d)\sm \{i\}}\l(|J|f(d-\del_i-\del_k)- \sum_{j\in J}f(d-\del_j-\del_k)\r)\\&<s^{-1}+s^{-1}\cdot(s-1)=1,\end{align*}
	where again this inequality used the minimality of $d$.  This completes the proof for $f$, and a nearly identical argument works for $F$.
\end{proof}

With this we can prove our main result for this section, which is a slightly more precise version of Theorem~\ref{thm:greedy}.  To state our result, we say that an $n$-vector $d$ is \textit{fully-supported} if $\supp(d)=[n]$.
\begin{thm}\label{thm:greedyTech}
	Let $\c{S}$ be a Shuffler strategy and $\greedy$ the greedy strategy.  Then $\c{S}$ is a minimizing Shuffler strategy if and only if $S(d)$ has the same distribution as $\greedy(d)$ for all $n$-vectors $d$, and $\c{S}$ is a maximizing Shuffler strategy if and only if $\c{S}(d)$ has the same distribution as $\greedy(d)$ for all fully-supported $n$-vectors $d$.
\end{thm}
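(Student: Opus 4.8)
The plan is to establish, by a single strong induction on $\sum_kd_k$, that $f(d)=f(d,\greedy)$ for every nonzero $n$-vector $d$ and that $F(d)=F(d,\greedy)$ for every $n$-vector $d$, and then to read the ``only if'' directions off the analysis of the inductive step. The two ``if'' directions are then immediate: if $\c{S}(d)$ has the same distribution as $\greedy(d)$ on all the relevant decks, then the identity \eqref{eq:StratSteal} (whose proof involves only the distribution of $\c{S}(d')$, not the random variable itself) gives $f(d,\c{S})=f(d,\greedy)=f(d)$, resp.\ $F(d,\c{S})=F(d,\greedy)=F(d)$, so $\c{S}$ is minimizing, resp.\ maximizing; for the maximizing case one also uses that $F(d')=0$ whenever $\supp(d')\ne[n]$ (a minimizing Guesser can guess, every round, a card type that has run out, scoring nothing), so the behaviour of $\c{S}$ on non-fully-supported decks is irrelevant there.

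For the inductive step fix $d$ and write $s=|\supp(d)|$, $v_j=f(d-\del_j)$ and $V=1+\sum_{j\in\supp(d)}v_j$. Combining Lemma~\ref{lem:recurrence} with the bound $f(d-\del_j,\c{S})\ge f(d-\del_j)$ and the inductive hypothesis $f(d-\del_j,\greedy)=f(d-\del_j)$, one obtains
\[f(d)=\min_q\Bigl(\max_{i\in\supp(d)}q_i+\sum_{j\in\supp(d)}q_jv_j\Bigr)=:\min_qg(q),\qquad f(d,\greedy)=g(u)=\tfrac1sV,\]
where $q$ ranges over probability vectors supported on $\supp(d)$, $u$ is the uniform one, and the first equality uses Lemma~\ref{lem:StratSteal} to realize an arbitrary $q$ by an actual Shuffler strategy. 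So the crux is to show $g$ is \emph{uniquely} minimized at $u$. This is where Lemma~\ref{lem:main} enters: taking $J=\supp(d)$ it gives $sv_i<V$, i.e.\ $V-sv_j>0$ for every $j\in\supp(d)$. Writing $M=\max_iq_i$ and $r_j=M-q_j\ge0$ (so that $\sum_jr_j=sM-1$), a one-line computation yields
\[s\bigl(g(q)-\tfrac1sV\bigr)=\sum_{j\in\supp(d)}r_j(V-sv_j)\ \ge\ 0,\]
with equality exactly when every $r_j=0$, i.e.\ when $q=u$. This proves $f(d)=f(d,\greedy)$, and it also proves the ``only if'' part at level $d$: if $\c{S}$ is a minimizing Shuffler strategy then $f(d,\c{S})=f(d)$, which forces equality throughout the chain above, hence $g(p)=g(u)$ for $p_i=\Pr[\c{S}(d)=i]$, hence $p=u$, i.e.\ $\c{S}(d)$ has the distribution of $\greedy(d)$.

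The maximizing statement is carried through the same induction. For $d$ with $\supp(d)\ne[n]$ we have $F(d)=0=F(d,\greedy)$ and there is nothing to do, so assume $d$ is fully supported. The recursion of Lemma~\ref{lem:recurrence} now reads $F(d,\c{S})=\min_ip_i+\sum_jp_jF(d-\del_j,\c{S})$, and repeating the previous paragraph with the inequalities reversed — put $w_j:=F(d-\del_j)$ (with $w_j=0$ when $d-\del_j$ is not fully supported), use $F(d-\del_j,\c{S})\le F(d-\del_j)$ and the inductive hypothesis $F(d-\del_j,\greedy)=F(d-\del_j)$ — expresses $F(d)$ as the \emph{maximum} over probability vectors $q$ on $\supp(d)$ of $\min_iq_i+\sum_jq_jw_j$. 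The $F$-version of Lemma~\ref{lem:main}, available at $d$ by the same inductive hypothesis, gives $1+\sum_kw_k-sw_j>0$, and the mirror-image computation then shows this maximum is attained only at the uniform vector. This yields $F(d)=F(d,\greedy)$ and, as before, that any maximizing Shuffler strategy agrees with $\greedy$ in distribution on every fully-supported $d$; on the non-fully-supported decks no constraint appears, since both $F(d,\c{S})$ and $F(d,\greedy)$ equal $0$ regardless of $\c{S}$.

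I expect the main obstacle to be the bookkeeping around supports in the reduction to the simplex. One must check that $\min_{\c{S}}f(d,\c{S})$ (resp.\ $\max_{\c{S}}F(d,\c{S})$) really equals the corresponding optimum of $g$ over probability vectors on $\supp(d)$ — the ``$\le$'' direction needs Lemma~\ref{lem:StratSteal} to realize an arbitrary $q$ by a genuine Shuffler strategy, while the ``$\ge$'' direction needs only $f(d-\del_j,\c{S})\ge f(d-\del_j)$ — and one must keep the inductive hypothesis $f(d',\greedy)=f(d')$ (for $d'\ne d$ with $d'_k\le d_k$ for all $k$) in hand at each step so that Lemma~\ref{lem:main} applies, which is automatic when the induction runs on $\sum_kd_k$. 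Once that scaffolding is set up, the decisive identity $s(g(q)-g(u))=\sum_jr_j(V-sv_j)\ge0$ and its $F$-analogue are routine, and strictness — hence uniqueness of the optimizer, hence uniqueness of the minimizing Shuffler strategy — comes directly from the strict inequalities $V-sv_j>0$ and $1+\sum_kw_k-sw_j>0$ of Lemma~\ref{lem:main}.
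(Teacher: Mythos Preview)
Your argument is correct, and it is genuinely different from the paper's. The paper proceeds by a local exchange: assuming a minimizing $\c{S}$ differs from $\greedy$ at some minimal $d$, it relabels so that $p_1=\cdots=p_i>p_{i+1}\ge\cdots$, shaves $\ep/i$ off each of $p_1,\ldots,p_i$, adds $\ep$ to $p_{i+1}$, and invokes Lemma~\ref{lem:main} with $J=\{1,\ldots,i\}$ to show the perturbed strategy strictly beats $\c{S}$. You instead reduce, via the induction and Lemma~\ref{lem:StratSteal}, to the finite-dimensional problem $\min_q g(q)$ over the simplex on $\supp(d)$, and then dispatch it with the clean identity
\[
s\bigl(g(q)-g(u)\bigr)=\sum_{j\in\supp(d)}(M-q_j)\,(V-sv_j),\qquad M=\max_i q_i,
\]
which is nonnegative (and zero only at $q=u$) as soon as $V-sv_j>0$ for every $j$; this last fact is exactly Lemma~\ref{lem:main} with $J=\supp(d)$. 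So you use only the full-support case $J=\supp(d)$ of that lemma, whereas the paper genuinely needs the statement for the proper subset of indices achieving $\max_kp_k$. Your route is shorter and avoids the bookkeeping of the perturbation; the paper's route, in exchange, makes more transparent \emph{why} Lemma~\ref{lem:main} is phrased for arbitrary $J$.

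One small remark: the $F$-inequality in Lemma~\ref{lem:main} as printed reads $|J|F(d-\del_i)>1+\sum_{j\in J}F(d-\del_j)$, but the inequality you actually need (and correctly state) is $sw_j<1+\sum_kw_k$, i.e.\ $|J|F(d-\del_i)<1+\sum_{j\in J}F(d-\del_j)$ for $J=\supp(d)$; the printed version is a typo (it already fails for $d=(1,1)$), and the proof of the lemma, carried through with the signs reversed, yields exactly your version. Otherwise your handling of the scaffolding --- the two-sided reduction $f(d)=\min_qg(q)$ via $f(d-\del_j,\c{S})\ge v_j$ on one side and Lemma~\ref{lem:StratSteal} plus the inductive hypothesis $f(d-\del_j,\greedy)=v_j$ on the other, the observation that $F(d,\c{S})$ depends only on $\c{S}$'s behaviour on fully-supported decks, and the extraction of uniqueness from the strict inequality $V-sv_j>0$ --- is all in order.
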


\begin{proof}
	Let $\c{S}$ be a minimizing Shuffler strategy, and assume for contradiction that there exists an $n$-vector $d$ such that $\c{S}(d)$ does not have the same distribution as $\greedy(d)$.  Choose such a $d$ so that $\sum d_i$ is as small as possible, which implies that Lemma~\ref{lem:main} applies to $d$.
	
	For all $k$, let $p_k=\Pr[\c{S}(d)=k]$.  By our choice of $d$, we have $p_k\ne p_{k'}$ for some $k,k'\in \supp(d)$.  By relabeling the card types, we can assume \[p_1=p_2=\cdots=p_i> p_{i+1}\ge p_{i+2}\ge\cdots \ge p_n\]
	for some $i\ge 1$ with $i+1\in \supp(d)$. Define $\ep=(p_1-p_{i+1})/(1+1/i)$ and let $\c{S}'$ be the strategy which has $\c{S}'(d')=\c{S}(d')$ for all $d'\ne d$, and for $d$ we have
	\[\Pr[\c{S}'(d)=k]=\begin{cases}
	p_1-i^{-1}\ep & k\le i,\\ 
	p_{i+1}+\ep & k=i+1,\\ 
	p_k & k\ge i+1.	
	\end{cases}\]
	Note that this is a Shuffler strategy because these probabilities add to 1 and $\Pr[\c{S}'(d)=k]>0$ only if $k\in \supp(d)$.  Also with this we have \[\max_{k}\Pr[\c{S}'(d)=k]=p_{i+1}+\ep=p_1-i^{-1}\ep.\] 
	
	Using Lemmas~\ref{lem:recurrence} and \ref{lem:StratSteal} and that $\c{S}$ is a minimizing Shuffler strategy, we find
	\[f(d,\c{S}')=p_1-i^{-1}\ep+\sum_{k\le i} (p_k-i^{-1}\ep)f(d-\del_k)+(p_{i+1}+\ep)f(d-\del_{i+1})+\sum_{k>i+1} p_k f(d-\del_k).\]
	Combining this with the recurrence for $\c{S}$ gives
	\[f(d,\c{S}')-f(d,\c{S})=-i^{-1}\ep-i^{-1}\ep\sum_{k\le i} f(d-\del_k)+\ep f(d-\del_{i+1})<0,\]
	where this last inequality used Lemma~\ref{lem:main} with $i+1$ and $J=\{1,2,\ldots,i\}$.  This contradicts $\c{S}$ being a minimizing Shuffler strategy, giving the desired result.
	
	The proof for the maximizing case is almost identical, and we briefly sketch the ideas for this argument.  If $d$ is not fully-supported then $F(d,\c{S})=0$ for all Shuffler strategies, so $\c{S}$ is a maximizing Shuffler strategy if and only if $F(d,\c{S})=F(d)$ for all fully-supported $d$. We assume for contradiction that $\c{S}$ is a maximizing Shuffler strategy and that there exists a fully-supported $d$ such that $\c{S}(d)$ does not have the same distribution as $\greedy(d)$.  This means there exists an $i$ with
	\[p_1=p_2=\cdots=p_i< p_{i+1}\le p_{i+2}\le\cdots \le p_n.\]
	We then define the strategy $\c{S}'$ exactly as before except that we use $\ep=-(p_1-p_{i+1})/(1+1/i)$.  Here $d$ being fully-supported is essential, as otherwise we might have $1\notin\supp(d)$ and that $\Pr[\c{S}'(d)=1]=-i^{-1}\ep>0$, so this would not be a Shuffler strategy.  From here the same analysis as before gives the result. 
\end{proof}
\section{Proof of Theorem~\ref{thm:main}}\label{sec:asy}
Given the work of the previous section, to prove Theorem~\ref{thm:main}, it suffices to determine how well the greedy strategy does against an optimal Guesser strategy. A key fact is that every ``reasonable'' Guesser strategy performs equally well against the greedy strategy in expectation.
\begin{lem}\label{lem:reduce}
	If $\c{G}$ is a Guesser strategy which always guesses a card type that is in the deck, then 
	\[\E[C_{m,n}(\c{G},\greedy)]=\max_{\c{G}'}\E[C_{m,n}(\c{G}',\greedy)].\]
	If $\c{G}$ is a Guesser strategy which, whenever possible, guesses a card type which is not in the deck, then
	\[\E[C_{m,n}(\c{G},\greedy)]=\min_{\c{G}'}\E[C_{m,n}(\c{G}',\greedy)].\]
\end{lem}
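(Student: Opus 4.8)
The plan is to prove, by induction on $\sum_i d_i$, the stronger assertions that $\E[C(d,\c{G},\greedy)] = f(d,\greedy)$ for every $n$-vector $d$ whenever $\c{G}$ always guesses a card type lying in the deck, and that $\E[C(d,\c{G},\greedy)] = F(d,\greedy)$ for every $n$-vector $d$ whenever $\c{G}$, whenever possible, guesses a card type not in the deck. Specializing to $d = (m,\ldots,m)$, for which $C(d,\c{G},\greedy) = C_{m,n}(\c{G},\greedy)$, then gives the lemma. The conceptual point driving the argument is that $\greedy(d)$ depends only on $d$ and not at all on Guesser's guess, so the evolution of the deck, and hence the continuation of the game, is unaffected by $\c{G}$; only the per-round probability of a correct guess depends on $\c{G}$, and these probabilities are simultaneously maximized (resp.\ minimized) by the two strategies in the statement.

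For the maximizing case the base case $d = (0,\ldots,0)$ is immediate, since both sides equal $0$. For the inductive step set $s = |\supp(d)|$, so that $\Pr[\greedy(d) = j] = s^{-1}$ for $j \in \supp(d)$ and $0$ otherwise. Expanding via the recursion \eqref{eq:Cdef} and conditioning on the value of $\greedy(d)$ gives
\[\E[C(d,\c{G},\greedy)] = \Pr[\c{G}(d) = \greedy(d)] + s^{-1}\sum_{j\in\supp(d)} \E[C(d-\del_j,\c{G},\greedy)].\]
Since $\c{G}(d) \in \supp(d)$ almost surely and is independent of $\greedy(d)$, the first term equals $s^{-1}\Pr[\c{G}(d)\in\supp(d)] = s^{-1}$. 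The restriction of $\c{G}$ to $n$-vectors of smaller total size again always guesses a card in the deck, so the inductive hypothesis yields $\E[C(d-\del_j,\c{G},\greedy)] = f(d-\del_j,\greedy)$ for each $j\in\supp(d)$. Comparing with Lemma~\ref{lem:recurrence} applied to $\greedy$, which gives $f(d,\greedy) = s^{-1} + s^{-1}\sum_{j\in\supp(d)} f(d-\del_j,\greedy)$ since $\max_{i}\Pr[\greedy(d)=i] = s^{-1}$, we obtain the claimed equality.

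The minimizing case runs identically with $F$ replacing $f$ throughout, using the analogue $F(d,\greedy) = \min_{i\in[n]}\Pr[\greedy(d)=i] + \sum_{j}\Pr[\greedy(d)=j]\,F(d-\del_j,\greedy)$ supplied by the same reasoning as in the proof of Lemma~\ref{lem:recurrence}. The only new ingredient is that $\min_{i\in[n]}\Pr[\greedy(d)=i]$ equals $0$ when $s < n$ and equals $n^{-1}$ when $s = n$, and a Guesser who guesses outside the deck whenever possible exactly realizes this success probability at $d$: if $s < n$ such a guess lies outside $\supp(d)$ and never matches, while if $s = n$ every card type is in the deck so the match probability is forced to be $n^{-1}$. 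The induction then closes exactly as before.

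I do not anticipate a serious obstacle; the points requiring care are to run the induction over all $n$-vectors $d$ rather than only the starting deck, to observe that the defining property of $\c{G}$ (``always guesses in the deck'', resp.\ ``guesses outside the deck when possible'') is inherited by its restriction to sub-decks so that the inductive hypothesis applies, and to note that the round-by-round independence of the deck's evolution from $\c{G}$ is precisely what lets one invoke Lemma~\ref{lem:recurrence} with the correct continuation function in each case.
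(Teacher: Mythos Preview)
Your proposal is correct and follows exactly the approach the paper indicates: the paper omits the proof, stating only that it ``is straightforward to prove by using Lemma~\ref{lem:recurrence} and induction on $\sum d_i$,'' and your argument carries this out in full detail. The key observations you make---that the deck evolution under $\greedy$ is independent of Guesser's choices, that $\max_i\Pr[\greedy(d)=i]=|\supp(d)|^{-1}$ while $\min_i\Pr[\greedy(d)=i]$ is $0$ or $n^{-1}$ according as $\supp(d)\ne[n]$ or $\supp(d)=[n]$, and that the hypothesis on $\c{G}$ is inherited on sub-decks---are precisely what makes the induction close.
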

This result is straightforward to prove by using Lemma~\ref{lem:recurrence} and induction on $\sum d_i$, so we omit its proof.  With this lemma we can prove the bounds of Theorem~\ref{thm:main} by analyzing the performance of any specific Guesser strategy which is ``reasonable''.  In particular, we define $\c{G}^+$ to be some Guesser strategy which always guesses a card type which appears with the highest multiplicity in the deck.

Let $C_k$ denote the number of times Guesser correctly guesses a card type which had multiplicity $k$ in the deck.  Note that this is a random variable which depends on the choices of strategies $\c{G},\c{S}$ in the game, and also note that we always have $C_{m,n}(\c{G},\c{S})=\sum C_k$.  Let $t_k$ be the smallest integer such that at the start of round $t_k$, each card type has fewer than $k$ copies left in the deck.  Define $V_{k,\ell}$ to be the number of card types which have at least $\ell$ copies left in the deck at the start of round $t_k$.   For example, $V_{k,\ell}=0$ for all $k\le \ell$ and $V_{k,\ell}\le V_{k',\ell}$ for all $k\le k'$.  We also adopt the convention that $V_{m+1,m}=n$.

\begin{lem}\label{lem:T}
	If Guesser and Shuffler use the strategies $\c{G}^+,\greedy$ in the Guesser-Shuffler game with a deck which has $n$ card types of multiplicity $m$, then
	\[\E[C_m]=\log n+O(1),\]
	
	and for $k<m$ we have \[\E[C_k]\le \log(\E[ V_{k+1,k}])+O(1)\].
\end{lem}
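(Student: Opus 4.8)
The plan is to evaluate $\E[C_k]$ up to an $O(1)$ error by isolating, for each card type, the unique round in which its multiplicity drops from $k$ to $k-1$, and showing that against $\greedy$ each such drop contributes a precise amount to the expected number of $C_k$-points.

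First I would argue that every correct guess counted by $C_k$ occurs in a round of the window $[t_{k+1},t_k)$. Before round $t_{k+1}$ some card type has at least $k+1$ copies, so $\c{G}^+$ guesses a card type of multiplicity exceeding $k$ and scores no $C_k$-point; at or after round $t_k$ no card type has $k$ copies, so again there is no $C_k$-point. Within the window the maximum multiplicity is identically $k$: it is nonincreasing, changes by at most $1$ per round, and equals $k$ at the start of round $t_{k+1}$ (for the same reason this window is nonempty and $V_{k+1,k}\ge 1$). Hence $\c{G}^+$ always guesses a card type currently having exactly $k$ copies. Let $R_1$ be the set of card types with exactly $k$ copies at the start of round $t_{k+1}$; then $|R_1|=V_{k+1,k}$, and the members of $R_1$ are exactly the card types whose multiplicity drops from $k$ to $k-1$ during the window, each exactly once.

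Write $J:=V_{k+1,k}$, and let $T_1<\cdots<T_J$ be the rounds of the window on which $\greedy$ selects a card type of multiplicity $k$, i.e.\ the rounds of the $J$ drops. Conditioning on the history $\c{H}$ through the end of round $t_{k+1}-1$, which determines $J$, I claim $\E[C_k\mid \c{H}]=H_J$ with $H_J=\sum_{i=1}^J 1/i$. At the start of round $T_r$ exactly $J-r+1$ members of $R_1$ still have $k$ copies; call this set $R_r$, so in round $T_r$ the strategy $\c{G}^+$ guesses some $g\in R_r$. A $C_k$-point is scored at $T_r$ precisely when $\greedy$ selects $g$; conditioning on the history up to and including Guesser's move in round $T_r$ but not Shuffler's selection, and using that $\greedy$ selects uniformly among the current support, that $g$ lies in the support, and that round $T_r$ is by construction a round on which $\greedy$ selects a member of $R_r$, this conditional probability equals $|R_r|^{-1}=(J-r+1)^{-1}$. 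Summing over $r$ gives $\E[C_k\mid \c{H}]=\sum_{r=1}^J (J-r+1)^{-1}=H_J$.

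It remains to read off the two statements. For $k=m$ we have $t_{m+1}=1$ and $V_{m+1,m}=n$ deterministically, so $\E[C_m]=H_n=\log n+O(1)$. For $k<m$, using $H_j\le 1+\log j$ for $j\ge 1$ together with $V_{k+1,k}\ge 1$ and the concavity of $\log$,
\[\E[C_k]=\E\big[H_{V_{k+1,k}}\big]\le 1+\E\big[\log V_{k+1,k}\big]\le 1+\log \E\big[V_{k+1,k}\big]=\log\big(\E[V_{k+1,k}]\big)+O(1).\]
I expect the only delicate point to be the conditioning in the third paragraph: one must choose the filtration so that the index $r$ of the current drop is already revealed, and then exploit that Guesser and Shuffler move simultaneously (so $g$ is independent of $\greedy$'s choice given the deck) and that $\greedy$ is uniform. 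Everything else is bookkeeping about the window $[t_{k+1},t_k)$; in particular, exhaustion of low-multiplicity card types during the window needs no special treatment, since such a card type simply leaves the support after its drop.
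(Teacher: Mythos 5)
Your proof is correct and follows essentially the same route as the paper's: condition on $V_{k+1,k}$, observe that against $\greedy$ the successive ``drop'' rounds contribute $r^{-1},(r-1)^{-1},\dots$ so that $\E[C_k\mid V_{k+1,k}=r]=H_r$, and then apply Jensen's inequality (concavity of $\log$) for $k<m$ and $V_{m+1,m}=n$ for $k=m$. Your treatment of the window $[t_{k+1},t_k)$ and the filtration is simply a more careful write-up of the conditioning that the paper states tersely.
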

Note that $V_{k+1,k}\ge 1$, so this logarithm is well defined.
\begin{proof}
	We claim that for all $k$,
	\[\E[C_k|V_{k+1,k}=r]=\sum_{x=1}^r x^{-1}.\]
	Indeed, after the last card type of multiplicity $k+1$ is drawn, Guesser (using strategy $\c{G}^+$) will continue to guess the remaining $r$ card types of multiplicity $k$ until they are depleted.  Because Shuffler uses the greedy strategy, the probability that the first of these card types drawn matches what Guesser guesses is exactly $r^{-1}$, the probability the second matches is $(r-1)^{-1}$ and so on; so the claim follows by linearity of expectation.  Thus
	\[\E[C_k]=\sum_{r\ge 1} \Pr[V_{k+1,k}=r]\cdot \sum_{x=1}^r x^{-1}=\sum_{r\ge 1} \Pr[V_{k+1,k}=r]\cdot (\log r+O(1))=\E[\log V_{k+1,k}]+O(1).\]
	
	The $k=m$ case of the lemma follows because $V_{m+1,m}=n$ deterministically, and the $k<m$ result follows from Jensen's inequality since $\log$ is a convex function.
\end{proof}  

We next show the following.

\begin{lem}\label{lem:coupon}
	If Shuffler uses the greedy strategy $\greedy$ in the Guesser-Shuffler game with a deck which has $n$ card types of multiplicity $m$, then for all $k<m$ we have
	\[\E[V_{k+1,k}]\le \E[V_{m,k}]\sim \f{(\log n)^k}{k!}\]
\end{lem}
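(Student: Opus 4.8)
The first inequality is essentially free. The excerpt already records that $V_{k,\ell}\le V_{k',\ell}$ whenever $k\le k'$; applying this with second index $\ell=k$ and first indices $k+1\le m$ (valid since $k<m$) gives $V_{k+1,k}\le V_{m,k}$ as random variables, and taking expectations yields $\E[V_{k+1,k}]\le \E[V_{m,k}]$. Thus the whole content of the lemma is the asymptotic evaluation of $\E[V_{m,k}]$, which I will read as the number of card types that have lost at most $k$ of their copies by round $t_m$, the first round at which every type has lost at least one copy.

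The plan is to study $\greedy$ through a continuous-time coupon-collector embedding. I attach to each of the $n$ types an independent rate-one Poisson clock and remove one copy of a type whenever its clock rings, ignoring all rings after a type has been removed $m$ times. Since the present types ring at rate one and the exponential is memoryless, the next genuine removal is always uniform over the types still in the deck, so the removal sequence has exactly the law it has under $\greedy$, and the wasted rings of exhausted clocks do not disturb this. Under this coupling, round $t_m$ corresponds to the continuous time $\tau=\max_i T_i$, where $T_i$ is the first ring time of clock $i$, the $T_i$ being i.i.d.\ $\mathrm{Exp}(1)$: indeed $\tau$ is precisely the instant at which the last still-full type loses its first copy. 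A type contributes to $V_{m,k}$ exactly when its clock has rung at most $k$ times by time $\tau$, and because $k<m$ the cap at $m$ never affects this event, so I may use the uncapped counts $N_i(\tau)$.

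By symmetry $\E[V_{m,k}]=n\,\Pr[N_1(\tau)\le k]$, so the crux is this probability. The nuisance is that $\tau$ depends on clock $1$ itself, so $N_1(\tau)$ and $\tau$ are dependent; I will sidestep this by setting $\tau'=\max_{i\ge 2}T_i$, which is independent of clock $1$, and verifying through a short case split (according to whether clock $1$ is the last clock to fire) that $\Pr[N_1(\tau)\le k]=\Pr[N_1(\tau')\le k]$. Conditioning on $\tau'=t$ makes $N_1(\tau')$ an honest $\mathrm{Poisson}(t)$ variable, so $\Pr[N_1(\tau')\le k]=\E[e^{-\tau'}\sum_{i=0}^{k}(\tau')^i/i!]$. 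Using $\Pr[\tau'\le t]=(1-e^{-t})^{n-1}$ and substituting $u=e^{-t}$ turns this into $\E[V_{m,k}]=n(n-1)\sum_{i=0}^{k}\frac{1}{i!}\int_0^1 (1-u)^{n-2}\,u\,(\log(1/u))^i\,du$. Each integral concentrates near $u=0$, and the scaling $u=v/n$ shows it is asymptotic to $(\log n)^i/n^2$; hence $\E[V_{m,k}]\sim\sum_{i=0}^{k}(\log n)^i/i!\sim \f{(\log n)^k}{k!}$, with the $i=k$ term dominating.

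The anticipated difficulties are bookkeeping rather than conceptual. The main one is making the decoupling identity $\Pr[N_1(\tau)\le k]=\Pr[N_1(\tau')\le k]$ fully rigorous, treating the boundary case where clock $1$ happens to realize the maximum. The second is controlling the integrals: one must justify replacing the factor $(\log(1/u))^i=(\log n-\log v)^i$ by $(\log n)^i$ at the cost of lower-order terms, so that only the $i=k$ summand survives in the limit and the Gamma integral $\int_0^\infty v\,e^{-v}\,dv=1$ supplies the constant. Finally I should confirm the exact match between the discrete configuration ``at the start of round $t_m$'' and the continuous state at time $\tau$, and reconfirm that capped clocks are irrelevant to the event $N_1(\tau)\le k$ because $k<m$. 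With these points settled the embedding delivers the stated estimate $\E[V_{m,k}]\sim (\log n)^k/k!$.
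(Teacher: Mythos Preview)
Your approach is genuinely different from the paper's and, in a sense, more self-contained: the paper couples the greedy shuffle to a ``brothers'' variant of the coupon-collector problem and then quotes Foata--Han--Lass for the asymptotic, whereas you compute directly via a Poisson embedding. The embedding itself, the decoupling identity $\Pr[N_1(\tau)\le j]=\Pr[N_1(\tau')\le j]$ (valid for $j\ge 1$), and the Laplace-type evaluation of the resulting integral are all sound.

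There is, however, a slip in reading the definition. You describe $V_{m,k}$ as ``the number of card types that have lost at most $k$ of their copies by round $t_m$,'' and accordingly translate it into the event $N_i(\tau)\le k$. But $V_{m,k}$ is defined as the number of types with at least $k$ copies \emph{remaining}; since each type starts with $m$ copies, this means drawn at most $m-k$ times, so the correct Poisson event is $N_i(\tau)\le m-k$. With the right threshold your own computation yields
\[
\E[V_{m,k}]\sim\frac{(\log n)^{m-k}}{(m-k)!}\,,
\]
not $(\log n)^k/k!$. A sanity check: for $m=3$, $k=2$, the quantity $V_{3,2}$ counts the types drawn exactly once at the coupon-collector time; your integral with $j=1$ correctly gives $\sim\log n=(\log n)^{m-k}/(m-k)!$, while the stated $(\log n)^2/2$ does not. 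In other words, your misreading exactly cancels what appears to be an index slip in the lemma as printed (likely inherited from the brother labelling in the cited reference). Since Lemma~\ref{lem:T} only uses $\log\E[V_{m,k}]=O(\log\log n)$, either exponent suffices for Theorem~\ref{thm:main}; but you should correct the threshold in your argument and flag the discrepancy explicitly rather than let two errors cancel silently.
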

\begin{proof}
	The first inequality follows from $V_{k+1,k}\le V_{m,k}$ since $k+1\le m$, so it suffices to prove the asymptotic result.  We do this by showing that $V_{m,k}$ has the same distribution as a random variable related to a variant of the coupon collector problem.
	
	The variant goes as follows: every day a household with $m$ brothers $b_1,\ldots,b_m$ receives a coupon $c$.  There are $n$ different types of coupons, each appearing with probability $1/n$. If $b_m$ does not already own a copy of $c$, then he adds it to his collection. Otherwise he gives it to his younger brother $b_{m-1}$ who keeps $c$ if he needs it, and otherwise he gives it to $b_{m-2}$ and so on.  Observe that $b_m$ will always be the first to obtain every type of coupon. Let $U_k$ denote the number of coupons that $b_k$ is missing when $b_m$ completes their collection.
	
	We claim that $U_k$ has the same distribution as $V_{m,k}$.  Indeed, we can form a coupling as follows.  Let $c_1,c_2,\ldots$ be the sequence of coupons that appear, and let $f(t)$ denote the $t$th coupon that was used in one of the $m$ collections (that is, this is the $t$th coupon after we throw away any coupon which has already appeared more than $m$ times).  We then define a deck of cards $\pi$ by letting $\pi_t=c_{f(t)}$ for all $t$.  Because each coupon type is equally likely to appear each day, it is not hard to show that $\pi$ is distributed the same as if Shuffler made it using the greedy strategy $\greedy$.   Moreover, having $b_k$ missing $r$ coupon types when $b_m$ finishes their set is equivalent to $\pi$ having $r$ card types with multiplicity $k$ after the last card with multiplicity $m$ is drawn in $\pi$, proving the claim.
	
	It was proven by Foata, Han, and Lass~\cite{FHL} that
	\[\E[U_k]\sim \f{(\log n)^k}{k!}\]
	for any fixed $k$, proving the result.  See also Adler, Oren and Ross~\cite{AOR} for a discussion of this result in English.  This completes the proof.
\end{proof}

With this we can now prove our main result.

\begin{proof}[Proof of Theorem~\ref{thm:main}]
	By Theorem~\ref{thm:greedy} and Lemma~\ref{lem:reduce}, we see that proving \eqref{eq:mainMin} is equivalent to showing that
	\[\E[C_{m,n}(\c{G}^+,\greedy)]\sim \log n,\]
	where $\c{G}^+$ is the strategy of guessing a card type which has highest multiplicity in the deck. This asymptotic result then follows by Lemmas~\ref{lem:T} and \ref{lem:coupon} together with the trivial bound $C_k\ge 0$ for all $k$.
	
	To prove \eqref{eq:mainMax}, let $\c{G}$ be any Guesser strategy which stops guessing card types that  are in the deck once a card type runs out in the deck.  by Theorem~\ref{thm:greedy} and Lemma~\ref{lem:reduce}, it suffices to prove
	\[\E[C_{m,n}(\c{G},\greedy)]\sim \Gam\l(1+\rec{m}\r)(m!)^{1/m}\cdot  n^{-1/m}.\]
	Let $\pi$ be the deck shuffled according to $\greedy$, and let $I_t$ be the indicator function which is 1 if $\c{G}$ correctly guesses $\pi_t$.  Let $T$ denote the largest index $t$ such that $\{\pi_1,\ldots,\pi_{t-1}\}$ does not contain a card type with multiplicity $m$.  Then $I_t=0$ if $t>T$, and more generally we have
	\[\Pr[I_t=1]=n^{-1}\cdot \Pr[T\ge t],\]
	as each card type is equally likely to appear in round $t\le T$.
	Because $C_{m,n}(\c{G},\greedy)=\sum I_t$, we see that
	\begin{equation}\E[C_{m,n}(\c{G},\greedy)]=n^{-1}\sum \Pr[T\ge t]=n^{-1}\cdot \E[T],\label{eq:C-}\end{equation}
	so it suffices to compute this expectation.
	
	We can interpret $T$ in terms of a variant of the birthday problem.  Namely, we can consider the following experiment: sample people with replacement until we find $m$ people which all have the same birthday.  If there are $n$ possible birthdays and each are equally likely, then the number of people we sample has the same distribution as $T$ (which can be proved using a coupling argument as in Lemma~\ref{lem:coupon}).  The expected value of this statistic for the birthday problem was computed by Klamkin and Newman~\cite{KN}, and their result implies
	\[\E[T]=\Gam\l(1+\rec{m}\r) (m!)^{1/m}\cdot n^{1-1/m},\]
	and plugging this into \eqref{eq:C-} gives the desired result.
\end{proof}

\section{Proof of Proposition~\ref{prop:twoDecks}}\label{sec:two}
We formally define the Restricted Matching Pennies game as follows.  We define $n$-vectors and $\del_i$ as in Section~\ref{sec:optimal}.   We say that $\c{A}$ is a strategy in this game if it is a function which takes in two $n$-vectors $a,b$ (in this order) and outputs a random variable $\c{A}(a,b)$ whose support lies in $\supp(a)$.  We recursively define the score $M(a,b,\c{A},\c{B})$ where $a,b$ are $n$-vectors with $\sum a_i=\sum b_i$ and $\c{A},\c{B}$ are strategies by being 0 if $a=b=(0,\ldots,0)$, and otherwise having \[M(a,b,\c{A},\c{B})=1_{\c{A}(a,b)=\c{B}(b,a)}+M(a-\del_{\c{A}(a,b)},b-\del_{\c{B}(b,a)},\c{A},\c{B}).\]
Our main goal for this section is to prove the following strengthening of Proposition~\ref{prop:twoDecks} which works for decks with arbitrary multiplicities.

\begin{prop}\label{prop:strongTwoDecks}
	Let $\c{U}$ be the strategy with $\Pr[\c{U}(a,b)=i]=a_i/\sum a_j$ for all $n$-vectors $a,b$.  Then for any strategies $\c{A},\c{B}$ and any two $n$-vectors $a,b$ with $\sum a_i=\sum b_i=N$, we have
	\[\E[M(a,b,\c{A},\c{U})]=\E[M(a,b,\c{U},\c{B})]=\sum_{i\in [n]}\f{a_ib_i}{N}.\]
\end{prop}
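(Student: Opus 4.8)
The plan is to prove the identity by induction on $N = \sum a_i = \sum b_i$, using the recursive definition of $M$ together with Lemma~\ref{lem:recurrence}-style reasoning adapted to this two-deck setting. The base case $N=0$ is trivial since both sides vanish. For the inductive step I will focus on the first equality $\E[M(a,b,\c{A},\c{U})] = \sum_i a_i b_i / N$; the second is symmetric (swapping the roles of the two decks and of $\c{A},\c{B}$, noting the definition of $M$ is symmetric under $(a,b,\c{A},\c{B}) \mapsto (b,a,\c{B},\c{A})$).

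First I would expand using the recursion: conditioning on Player $\c{A}$ picking card type $i$ (with some probability $q_i = \Pr[\c{A}(a,b)=i]$, supported on $\supp(a)$) and Player $\c{U}$ picking card type $j$ (with probability $b_j/N$), we get
\[
\E[M(a,b,\c{A},\c{U})] = \sum_{i} q_i \frac{b_i}{N} + \sum_{i}\sum_j q_i \frac{b_j}{N}\,\E[M(a-\del_i,\, b-\del_j,\, \c{A},\, \c{U})].
\]
The first term is the probability of a match this round. For the second term I need to be slightly careful: the restriction of $\c{A}$ to smaller decks is still a valid strategy, and $\c{U}$ restricted to $b-\del_j$ is again the uniform strategy on a deck with $N-1$ cards, so the inductive hypothesis applies and gives $\E[M(a-\del_i,b-\del_j,\c{A},\c{U})] = \sum_k (a-\del_i)_k (b-\del_j)_k / (N-1)$.

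The main work — and the step I expect to be the main obstacle — is the algebraic simplification: substituting the inductive formula, expanding $(a-\del_i)_k(b-\del_j)_k = a_k b_k - a_k[k=j] - b_k[k=i] + [k=i][k=j]$, and summing against the weights $q_i b_j / N$. Carrying out the sums over $k$, then $j$ (using $\sum_j b_j = N$), then $i$ (using $\sum_i q_i = 1$), the cross terms should collapse. I anticipate the computation yields $\frac{1}{N-1}\big(\sum_k a_k b_k - \frac{1}{N}\sum_k b_k\sum_i q_i a_i - \sum_i q_i b_i + \text{(a } q_i b_i/N \text{ term)}\big)$, and after combining with the leading match term $\sum_i q_i b_i/N$ the dependence on the strategy $\c{A}$ (i.e., on the $q_i$) must cancel entirely, leaving exactly $\frac{1}{N-1}(\sum_k a_k b_k - \frac{1}{N}\sum_k a_k b_k) = \frac{1}{N}\sum_k a_k b_k$. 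The key cancellation to watch is that the term $\frac{1}{N}\sum_i q_i a_i \sum_k b_k = \sum_i q_i a_i$ and the term $\sum_i q_i b_i$ are exactly what is needed to kill the strategy-dependent pieces; verifying this precisely is the crux. Once the algebra checks out, the induction closes and Proposition~\ref{prop:twoDecks} follows by setting $a = b = (m,\ldots,m)$, so $N = mn$ and $\sum_i a_i b_i / N = n \cdot m^2/(mn) = m$.
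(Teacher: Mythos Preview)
Your approach is correct and essentially identical to the paper's: both induct on $N$, condition on the two players' choices $i,j$, apply the inductive hypothesis to the resulting $(N-1)$-card decks, and verify algebraically that the $q_i$-dependent pieces cancel to leave $\tfrac{1}{N}\sum_k a_k b_k$. One small correction to your sketched algebra: the strategy-dependent terms that actually appear and must cancel are $\sum_i q_i b_i$ terms (arising from the $-b_i$ and $[i{=}j]$ parts of the expansion of $\sum_k(a-\del_i)_k(b-\del_j)_k=\sum_k a_kb_k-a_j-b_i+[i{=}j]$), not $\sum_i q_i a_i$; with that fix the cancellation against the leading match term $\tfrac{1}{N}\sum_i q_i b_i$ goes through exactly as you describe.
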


\begin{proof}
	For some slight ease of notation we define $f(a,b,\c{A})=\E[M(a,b,\c{A},\c{U})]$.  We prove by induction on $N$ that for all $n$-vectors $a,b$ with $\sum a_i=\sum b_i=N$, we have \[f(a,b,\c{A})=\sum_{i\in [n]}\f{a_ib_i}{N}\] for all strategies $\c{A}$, with the analogous result for $\c{B}$ being proven in exactly the same way.  
	
	The base case $N=0$ is trivial.  Assume we have proven the result up to some value $N$ and let $a,b$ be $n$-vectors with $\sum a_i=\sum b_i=N$.  Let \[f_{i,j}(a,b,\c{A})=\E[M(a,b,\c{A},\c{U})|\c{A}(a,b)=i,\ \c{U}(b,a)=j]\] and define $p_i=\Pr[\c{A}(a,b)=i]$.  Because $\c{U}$ guesses $j$ with probability $b_j/N$, we see that
	\begin{equation}f(a,b,\c{A})=\sum_{i} p_i \sum_j b_j N^{-1}\cdot  f_{i,j}(a,b,\c{A}).\label{eq:ij}\end{equation}
	
	By our inductive hypothesis and the definition of $M$, we have
	\[f_{i,i}(a,b,\c{A})=1+\f{(a_i-1)(b_i-1)}{N-1}+\sum_{k\ne i}\f{a_kb_k}{N-1}=(N-1)^{-1}(N-b_i-a_i+\sum_{k}a_kb_k),\]
	and for $i\ne j$ we have
	\[f_{i,j}(a,b,\c{A})=\f{(a_i-1)b_i}{N-1}+\f{a_j(b_j-1)}{N-1}+\sum_{k\ne i,j}\f{a_kb_k}{N-1}=(N-1)^{-1}(-b_i-a_j+\sum_k a_kb_k).\]
	Thus for any fixed $i$ we have
	\begin{align*}\sum_j b_j(N-1)\cdot f_{i,j}(a,b,\c{A})&=Nb_i+\sum_j b_j\l(-b_i-a_j+\sum_k a_kb_k\r)\\ &=Nb_i-Nb_i-\sum_j a_jb_j+N\sum_k a_kb_k=(N-1)\sum_j a_jb_j,\end{align*}
	where the second equality used that $\sum b_j=N$ in two places.	Plugging this expression into \eqref{eq:ij} and using $\sum_i p_i=1$ completes the inductive step in the proof of $f(a,b,\c{A})=\sum a_jb_j/N$ for all strategies $\c{A}$, proving the result.
\end{proof}

\section{Concluding Remarks}\label{sec:concluding}
\textbf{Strategies for Guesser.}  In Theorem~\ref{thm:greedy}, we showed that the greedy strategy is optimal for Shuffler in the Guesser-Shuffler game.  It is natural to ask what the optimal strategies are for Guesser.  Computations suggest that such strategies are complicated to describe, even for $n=2$ or $m=2$.  However, we do know some simple strategies for Guesser which perform close to best possible. 

Let $\c{G}^+$ be the strategy which uniformly at random chooses a card type which has the maximum number of copies left in the deck.  It is not hard to show that the expected score under this strategy is always at least the $n$th harmonic number $H_n\sim \log n$.  By \eqref{eq:mainMin}, we know that asymptotically Guesser can not do better than this in expectation.

Similarly one can come up with a strategy $\c{G}^-$ which achieves at most $\Theta_m(n^{-1/m})$ correct guesses in expectation.  This strategy goes through several phases, starting in Phase $m$.  If the strategy is currently in Phase $i$ with $i>0$ and there are at least $n^{(i-1)/m}$ card types in the deck with multiplicity smaller than $i$, then proceed to Phase $i-1$.  Otherwise uniformly at random guess a card type which appears with multiplicity $i$.  When the game reaches Phase 0 there is some card type which is no longer in the deck, and we guess this card type for the rest of the game.  It is not difficult to show that the expected score during each Phase is at most $n^{-1/m}$, so for $m$ fixed this gives about $n^{-1/m}$ correct guesses.

\textbf{Partial Feedback.}  The Guesser-Shuffler game is an adversarial version of the complete feedback model, and there are natural adversarial versions of other variants of this model.  For example, in the partial feedback model, the Guesser is only told whether their guess was correct or not each round (cf. this with the complete feedback model where the card is always revealed to the Guesser each round). Motivated by this, we define the \textit{online partial feedback game} to be the two player game where each round Shuffler chooses a card from the deck, Guesser guesses it, and then Guesser is told whether their guess is correct or not.  We similarly define the \textit{offline partial feedback game} where now Shuffler can freely shuffle the deck at the start of the game, but afterwards can not change the order of the cards.  

We note that in the Guesser-Shuffler game, the distinction between online and offline games were effectively irrelevant.  This is roughly because the state of the game (but not the score) at any point in time depends only on Shuffler's choices (i.e. the cards which they discarded from the deck), so Guesser's actions during the game should not influence how Shuffler shuffles.   However, under partial feedback, the state of the game also depends on Guesser's actions (i.e. this determines how much information Guesser has at any point), and seemingly Shuffler should be able to utilize this information during the game.

It was shown in \cite{DGHS} that for $n$ significantly large in terms of $m$, Guesser can not get more than $m+O(m^{3/4}\log m)$ points in expectation when the deck is shuffled uniformly at random and the Guesser is given partial feedback.  Thus in the adversarial models one can not hope for Guesser to get asymptotically more than $m$ correct guesses, which they can always guarantee by guessing the same card type each round.  When the deck is shuffled uniformly at random, it is known \cite{DGS} that Guesser can play so that they get $m+\Om(m^{1/2})$ correct guesses when $n$ is sufficiently large, but we do not know of any such strategy that works in the adversarial setting. 

\begin{quest}
	Is there a strategy for Guesser in either the online or offline partial feedback game which uses a deck with $m$ copies of $n$ different card types such that their expected score is ``significantly'' larger than $m$?
\end{quest}
The best strategy we are aware of in the offline version is for Guesser to uniformly at random choose some $i_1\in [n]$, then guess $i_1$ each round until they get $m$ correct guesses, then to uniformly at random choose some $i_2\in [n]\sm \{i_1\}$ and guess $i_2$ until $m$ correct guesses are made, and so on.  It is not difficult to see that Shuffler can arrange the deck ahead of time so that Guesser gets $m+1/2$ correct guesses in expectation under this strategy when $m>1$ (namely by shuffling the deck so that the last $n$ cards are all distinct).  However, in the online version of this game, Shuffler can guarantee that Guesser gets exactly $m$ correct guesses using this strategy when $m>1$.  %In fact, we do not know of any strategy giving Guesser strictly more than $m$ correct guesses in expectation when $m>1$ in the online version of this game, though we strongly suspect that such a strategy should exist.

The best strategy we are aware of in the online setting is for Guesser to uniformly at random guess either 1 or 2 each round until they correctly guess either $m$ 1's or $m$ 2's, at which point they only guess the other card type for the rest of the game.  Note that if Guesser simply uniformly guessed either 1 or 2 each round, then it is easy to show that they get $m$ points in expectation.  By additionally ignoring cards that Guesser knows are not in the deck, one can show that they improve their score by roughly $2^{-m}$ in expectation.

\textbf{Other Games.}  One can form a common generalization of the Guesser-Shuffler and Restricted Matching Pennies games as follows.  For any integer $k$, define the $k$-Guesser-Shuffler game to be the same as the Guesser-Shuffler game with the restriction that Guesser can guess each card type at most $k$ times throughout the game.  For example, if each of the $n$ card types appears $m$ times, then the $mn$-Guesser-Shuffler game is equivalent to the Guesser-Shuffler game (since no restriction is placed on Guesser), and the $m$-Guesser-Shuffler game is equivalent to the Restricted Matching Pennies game.  

\begin{quest}
	What are the optimal strategies in the $k$-Guesser-Shuffler game when one player wishes to maximize and the other wishes to minimize the number of correct guesses made during this game?  How many correct guesses are made in expectation under these strategies?
\end{quest}
Our results in Theorems~\ref{thm:greedy}, \ref{thm:main}, and Proposition~\ref{prop:twoDecks} shows that the answers to this question vary significantly between $k=mn$ and $k=m$, and it would be interesting to know if there is some sort of interpolation between these results as $k$ varies.

The $k$-Guesser-Shuffler game can be viewed as playing multiple rounds of the classical Matching Pennies game where restrictions are placed on how many times each player may use each move.  It is natural to consider what happens for other games.   For example, the game Rock, Paper, Scissors has two players selecting one of Rock, Paper, or Scissors each round with Rock beating Scissors, Scissors beating Paper, and Paper beating Rock.  If a player Alice picks a move that beats the move of Bob, then Alice is given a point and Bob loses a point, and if they pick the same move both players are given 0 points.

\begin{quest}
	Define the game Semi-Restricted Rock, Paper, Scissors by playing $3m$ rounds of Rock, Paper, Scissors where one player must use each move exactly $m$ times (with no restrictions placed on the other player).  What are the optimal strategies in this game and what is the expected score under these strategies?
\end{quest}
We note that similar games have appeared in the manga Tobaku Mokushiroku Kaiji~\cite{Kaiji}, namely the games ``Restricted Rock, Paper, Scissors'' and ``E-Card.''

\textbf{Acknowledgments.}  The author would like to thank Jimmy He for pointing out minor typos in an earlier draft.

\bibliographystyle{abbrv}
\bibliography{OCG}

\end{document}